\newtheorem{theorem}{Theorem} 
\newtheorem{corollary}{Corollary} 
\newtheorem{proposition}{Proposition} 
\newtheorem{lemma}{Lemma} 
\theoremstyle{definition}
\newtheorem{remark}{Remark} 
\newtheorem{assumption}{Assumption}{{\Alph{assumption}}}
\newcommand{\paren}[1]{\left\{ {#1} \right\}} 
\newcommand{\real}{\mathbb{R}} 
\newcommand{\integ}{\mathbb{Z}} 
\newcommand{\inter}{\mathrm{int}} 
\newcommand{\norm}[1]{\left\Vert {#1} \right\Vert} 
\newcommand{\conv}{\mathrm{conv}} 
\newcommand{\ie}{{\em i.e.}, }
\newcommand{\eg}{{\em e.g.}, }
\newcommand{\bzero}{{\bf 0}}
\newcommand{\bone}{{\bf e}}
\newcommand{\ba}{{\bf a}}
\newcommand{\bb}{{\bf b}}
\newcommand{\bc}{{\bf c}}
\newcommand{\bd}{{\bf d}}
\newcommand{\bu}{{\bf u}}
\newcommand{\bv}{{\bf v}}
\newcommand{\bw}{{\bf w}}
\newcommand{\bx}{{\bf x}}
\newcommand{\by}{{\bf y}}
\newcommand{\bz}{{\bf z}}
\newcommand{\bba}{{\bf A}}
\newcommand{\bbb}{{\bf B}}
\newcommand{\bbc}{{\bf C}}
\newcommand{\bbd}{{\bf D}}
\newcommand{\bbi}{{\bf I}}
\newcommand{\bbz}{{\bf Z}}
\newcommand{\balpha}{{\boldsymbol \alpha}}
\newcommand{\bbeta}{{\boldsymbol \beta}}
\newcommand{\bdelta}{{\boldsymbol \delta}}
\newcommand{\blambda}{{\boldsymbol \lambda}}
\DeclareMathOperator*{\argmax}{arg\,max}
\DeclareMathOperator*{\argmin}{arg\,min}
\begin{document}



\title{A Scalable Algorithm for Two-Stage Adaptive Linear Optimization}

\author[1]{Dimitris Bertsimas\thanks{dberts@mit.edu}}
\author[1]{Shimrit Shtern\thanks{sshtern@mit.edu}}
\affil[1]{Operations Research Center,  Massachusetts Institute of Technology}
\date{}

\maketitle

\abstract{%
	The column-and-constraint generation (CCG) method was introduced by \citet{Zeng2013} for solving two-stage adaptive optimization. We found that the CCG method is quite scalable, but sometimes, and in some applications often, produces infeasible first-stage solutions, even though the problem is feasible. In this research, we extend the CCG method in a way that (a) maintains scalability and (b) always produces feasible first-stage decisions if they exist. We compare our method to several recently proposed methods and find that it reaches high accuracies faster and solves significantly larger problems.
}%


{\bf Keywords:} adaptive optimization; two-stage problem; feasibility oracle; Benders decomposition

\section{Introduction}
The \emph{robust optimization} (RO) methodology is an approach which deals with uncertainty in the parameters of an optimization problem. It differs from probabilistic approaches to uncertainty, such as stochastic programming, by the fact that it assumes knowledge of an uncertainty set rather than specific probability distributions. A full description of the methodology and its various applications can be found in \citep{BenTal2009,Bertsimas2011}. The RO approach conducts worst case analysis resulting in semi-infinite problems, since each constraint must be satisfied for each point in the continuous uncertainty set. For a wide range of problems, assuming that all the decision variables are ``here and now" decisions made without knowledge of the uncertainty realization and for many types of uncertainty sets, finding a solution is both theoretically and practically tractable. Such a solution may be obtained by either using the \emph{robust counterpart}, which is a deterministic finite reformulation, or by generating constraints only as needed until the solution
is optimal and feasible with respect to the uncertain constraints \citep{Fischetti2012}.

A more complicated case is when the decisions are partly ``here and now" and partly adjustable ``wait and see", for which the variables are functions of the unknown realization of the uncertain data. Problems of this kind may arise in dynamic systems with planning variables, such as the decision whether or not to build a facility or determining its capacity, and control variables, such as allocation of resources or actual production. 
If no restriction is placed on the structure of the adjustable function, the problem becomes computationally hard, both theoretically  \citep{BenTal2004} and practically. Therefore, in many cases the adjustable functions are restricted to be affine, to ensure tractability as suggested by \citet{BenTal2004}, and this type of adjustability has been applied to control theory \citep{Goulart2006} and supply chain management \citep{BenTal2005,BenTal2009}. However, this restricted adjustability may result in suboptimal decisions in the non-adjustable variables as well. 
An alternative approach is the adaptive optimization (AO) approach, which aims to provide approximations of the true adaptive function and to quantify, if possible, how good these approximations are. This approach was primarily investigated in the context of two-stage models, although some recent work also deals with extensions for the multi-stage case. In this paper, we focus on the following linear two-stage problem
\begin{equation}\label{eq:2stage_robust}
\begin{aligned}
\min_{\bx\in X}\max_{\bu\in U} \min_{\by\in \real^M:\bba\bx+\bbb\by+\bbc\bu\geq \bc, \by\geq 0} \ba^T\bx+\bb^T\by,
\end{aligned}
\end{equation}
where $\ba\in \real^n$, $\bb\in\real^m$, $\bba\in\real^{r\times n}$, $\bbb\in \real^{r\times m}$, $\bbc\in \real^{r\times l}$, $\bc\in \real^{r}$ are the problem's parameters, $\bx\in X\subseteq \real^{n_1}\times \integ^{n_2}$, $n=n_1+n_2$ is the first-stage decision variable, $\by\in\real^m$ is the continuous second-stage decision, and $\bu$ is the uncertain parameter which lies in a convex and compact set $U$. 
In particular, we treat the case where $U$ is a polytope of the form
\begin{equation}
\label{eq:Define_U}
\paren{\bu\in \real^{l}:\bbd\bu\leq \bd, \bu\geq\bzero},
\end{equation} such that $\bbd\in \real^{d\times l}$ and $\bd\in \real^d$. 
Although this two-stage model is the simplest model where decision values can depend on the uncertainty, it still encompasses a large variety of problems, such as network problems \citep{AtamturkZhang2007,Gabrel2014}, portfolio optimization \citep{Takeda2007}, and unit commitment problems \citep{Bertsimas2013,Zhao2013}. 

We first assume that Problem \eqref{eq:2stage_robust} is in fact bounded. This is usually the case when the objective function depicts cost/profit or some physical quantity such as energy.
We define the set of feasible second-stage decisions as
\begin{equation}
Y(\bx,\bu)=\{\by:\bba\bx+\bbb\by+\bbc\bu\geq \bc, \by\geq \bzero\}.
\end{equation} 
Problem~\eqref{eq:2stage_robust} satisfies one of the following assumptions regarding this set.
\begin{assumption}[Relatively complete
recourse]\label{ass:full recourse}
For any $\bx\in X$ and any $\bu\in U$ the set  $Y(\bx,\bu)\neq \emptyset$.
\end{assumption}
\begin{assumption}[Feasibility]\label{ass:feasibility}
There exists $\bx\in X$ such that for any $\bu\in U$ the set  $Y(\bx,\bu)\neq \emptyset$.
\end{assumption}
Assumption~\ref{ass:full recourse} implies that any first-stage decision is in fact feasible for the two-stage problem, while Assumption~\ref{ass:feasibility} only assumes existence of at least one such solution. Thus, it is clear that Assumption~\ref{ass:feasibility} is more general than Assumption~\ref{ass:full recourse}.

Specific approaches to solving this two-stage problem via AO include Benders decomposition type methods \citep{benders1962}. Benders decomposition methods are iterative methods that alternate between finding a lower bound and an upper bound of the objective function until both bounds converge.
In order to obtain the upper bound for a fixed first-stage decision, we are required to solve the adversarial problem,  that is, to find an uncertainty realization that results in the worst objective function value (this value can be thought of as infinity if the first-stage decision does not yield a feasible second-stage decision).
Finding this upper bound is the more challenging part, since the upper bound problem is nonconvex, and Benders decomposition methods differ from each other in the way they approximate this upper bound. 
In particular, both linearization \citep{Bertsimas2013} and a complementarity based mixed-integer optimization (MIO) approach, called column-and-constraint generation (CCG) \citep{Zeng2013}, were suggested as possible solutions. 
In particular the CCG method guarantees reaching the optimal solution for Problem~\eqref{eq:2stage_robust} in a finite number of steps. \citet{Zeng2013} acknowledge that for the case where Assumption~\ref{ass:feasibility} holds (but Assumption~\ref{ass:full recourse} does not) the CCG method requires a feasibility oracle, but they do not provide such an oracle for the general problem. However, the CCG method is easy to implement and practically scalable for the case where Assumption~\ref{ass:full recourse} does hold, and therefore it is desirable to extend it to the case where only Assumption~\ref{ass:feasibility} holds. 

More general models of AO, that can also be applied to multi-stage problems, include: finding a near optimal solution using cutting planes and partitioning schemes \citep{BertsimasGeorghiou2015}, and finding piece-wise affine strategies by partitioning the uncertainty set using active constraints and uncertainties \citep{BertsimasDunning2014,PostekDenHertog2014}. \citet{BertsimasDunning2014} showed that the near-optimal scheme suggested by \citet{BertsimasGeorghiou2015} is not scalable, and that the partitioning schemes, while more scalable, might not reach the optimal solution in reasonable time.

To illustrate the underlying problems with the current methods, we tested the aforementioned algorithms on the capacitated network lot-sizing problem, the location-transportation problem, and unit commitment problem IEEE-14 bus example (see Section~\ref{sec:NumericalResults} for details). The summary for the AMIO partitioning model suggested by \citet{BertsimasDunning2014}, as well as the CCG model suggested by \citet{Zeng2013} are given in Table~\ref{Tbl:CompareMethods}. The table presents the percentage of instances that obtained a feasible solution, the percent of instances that terminated, \ie their upper and lower bound achieved the required $0.1\%$ gap at the specified time limit, the average and standard deviation of the time it took those instances to terminate, and the average optimality gap for the instances that were not terminated.  We can see that while the CCG always terminates before the time limit, in more than $30\%$ of the cases it results in an infeasible (and hence a non-optimal) solution. In contrast, the AMIO usually achieves a feasible solution, but takes a long time to obtain a high accuracy solution. For example, in all three unit commitment scenarios the CCG did not obtain a feasible solution while the AMIO was unable to obtain any solution (hence the missing values in the table).

\renewcommand{\arraystretch}{0.7}
\begin{table}[h]
\caption{Comparison between existing algorithms.}\label{Tbl:CompareMethods}
\begin{center}
\begin{tabular}{|l|l|c|c|c|}
\hline
Problem Type & Algorithm & \% Feasible&  \% Instances  terminated& Mean (std) time  for\\
&& Instances &(Optimality gap) & optimal  instances (sec)\\
\hline
{Location-}  & CCG & 69\% & 100\% & 27.22 (15.78)\\
Transportation& AMIO & 100\% & 46\% (0.5\%) & 449.4 (285.06)\\
\hline
{Capacitated}  & CCG & 66\% & 100\% &  10.5 (2.96)\\
 Lot-Sizing & AMIO & 100\% & 97\% (0.4\%) & 151.22 (158.4)\\
\hline
Unit   & CCG & 0/3 & 3/3 & -\\
Commitment& AMIO & - & 0/3 & -\\
\hline
\end{tabular}
\end{center}
\end{table}

In the case $U$ is a polyhedral set,
\citet{Bertsimas2015} showed that Problem~\eqref{eq:2stage_robust} has a two-stage robust dual formulation. This dual formulation has the desirable property that the optimal second-stage decisions do not depend on the first-stage one, but rather only on the dual uncertainty. The authors also show that using both primal and dual formulations with affine decision rules leads to a tighter lower bound on the optimal objective value of Problem~\eqref{eq:2stage_robust} than the one obtained by using only the primal formulation. 

In this paper, we suggest combining the ideas of CCG \citep{Zeng2013}, the dual representation of the problem \citep{Bertsimas2015}, and AMIO \citep{BertsimasDunning2014} to construct an algorithm which
\begin{enumerate}
\item Extends the result of \citet{Zeng2013} to Problem \eqref{eq:2stage_robust} under Assumption~\ref{ass:feasibility} (rather than Assumption~\ref{ass:full recourse}), \it{i.e.}, it guarantees convergence to an optimal solution in a finite number of steps.
\item Is scalable even for the case in which the first-stage decision has integer components.
\item Has superior performance to both CCG and AMIO for several numerical examples.
\end{enumerate}

{\noindent \bf Paper structure:}
In Section \ref{sec:CCG}, we present the CCG algorithm suggested in \citep{Zeng2013}, discuss its implementation for the case of Problem~\eqref{eq:2stage_robust}, and show its convergence properties for the case Assumption~\ref{ass:full recourse} holds. We conclude Section~\ref{sec:CCG} by presenting a natural extension of the CCG algorithm to the case where only Assumption~\ref{ass:feasibility} holds, and discuss why this extension is not practical. In Section
\ref{sec:DDBD}, we present our new \emph{Duality Driven Bender Decomposition} (DDBD) method for solving Problem~\eqref{eq:2stage_robust} satisfying Assumption~\ref{ass:feasibility}, proving convergence in a finite number of iterations. In Sections \ref{sec:F1}-\ref{sec:F2}, we describe the two main sub-algorithms used by our method.
Finally, in Section~\ref{sec:NumericalResults}, we compare the existing AMIO and CCG methods with our new DDBD method, using numerical examples for the location-transportation problem, the capacitated network lot-sizing problem,  and the unit commitment problem.

{\noindent \bf Notations:}
We use the following notation throughout the  paper. A column vector is denoted by boldface lowercase $\bx$ and a matrix by boldface uppercase $\bba$.  For a full rank matrix $\bba$, $\bba^{-1}$ denotes its inverse. We use the notation $\norm{\bx}$ to denote a norm of a vector and $\bx^T$, $\bba^T$ to denote a vector and matrix transpose. An element of the vector will be denoted by $x_i$, an element of the matrix by $A_{ij}$, and the matrix $i$th row by $\bba_i$.  Superscript will be used to denote an element of a set, \eg $\bx^p\in X$ is a vector which is associated with the $p$ index of the set $X$, it may also be used to denote an iterator. The notation $|a|$ is used to denote the absolute value, if $a$ is a scalar, and the cardinality of $a$ if it is a set. The vectors $\bzero$ and $\bone$ are vectors of all zeros or all ones, respectively, and the vector $\bone_i$ denotes the $i$th vector of the standard basis. We denote the identity matrix by $\bbi$.

\section{Column-and-Constraint Generation (CCG)}\label{sec:CCG}
In this section, we present the CCG algorithm suggested by \citet{Zeng2013} and prove that under Assumption~\ref{ass:full recourse} it will converge in a finite number of steps to an optimal solution of Problem \eqref{eq:2stage_robust}, and that if the assumption does not hold it might converge to an infeasible first-stage decision.
\subsection{CCG Algorithm Description}
The CCG algorithm is a general iterative method to find an optimal solution of two-stage problems. Specifically, it alternates between finding a lower bound $LB$ on the objective value and the corresponding first-stage decision $\bx$ using a finite number of uncertainty realizations, and an upper bound for the second-stage objective given $\bx$. 

Before we discuss the full algorithm, we present the optimization problem that will be used to obtain the lower bound in the first part of the algorithm.
Given $\bx$ for any $V=\{\bu^0,\ldots,\bu^{k}\}\subseteq U$, a lower bound for the second-stage cost of  Problem~\eqref{eq:2stage_robust} can be obtained using the following optimization problem
\begin{equation}\label{Prob:LowerBound_SecondStage}\underline{Z}(\bx,V)=\min_{\theta,\{\by^s\}_{0\leq s\leq k}} \paren{\ba^T\bx+\theta:\;\bb^T\by^s\leq \theta,\;\bba\bx+\bbb\by^s+\bbc\bu^s\geq \bc,\;\by^s\geq \bzero,\; s=0,\ldots,k}.\end{equation}

Notice that given a first-stage decision $\bx$ and an uncertainty parameter $\bu$, the second-stage cost is given by \begin{equation}\label{eq:SecondStage_Decision}\underline{Z}(\bx,\{\bu\})=\min_{\by\in\real^m_+,\bba\bx+\bbb\by+\bbc\bu\geq \bc} \bb^T\by\equiv \min_{\by\in Y(\bx,\bu)} \bb^T\by \end{equation}

Given a first-stage decision $\bx$, an upper bound on the second-stage cost is given by
\begin{equation}\label{eq:SecondStage_UpperBound}\overline{Z}(\bx)=\max_{\bu\in U} \underline{Z}(\bx,\{\bu\})\equiv \ba^T\bx+ \max_{\bu\in U} \min_{\by\in \real^M:\bba\bx+\bbb\by+\bbc\bu\geq \bc, \by\geq 0} \bb^T\by,\end{equation}
and the uncertainty realization that results in this cost is given by
\begin{equation}\label{eq:SecondStage_WorstCaseRealization}\overline{\bu}(\bx)\in \argmax_{\bu\in U}\underline{Z}(\bx,\{\bu\}).\end{equation}
Given these definitions, the CCG algorithm is presented as Algorithm~\ref{alg:CCG}.

\begin{algorithm}[h]\caption{Column-and-Constraint Generation (CCG)}\label{alg:CCG}
		\begin{itemize}
       		\item[$\rm{(1)}$] {\bf Input:} $X,\bba,\bbb,\bbc,\ba,\bb,\bc,\bbd,\bd, \epsilon>0,\bu^0\in U.$
            \item[$\rm{(2)}$] {\bf Initialize:} ${UB}^0=\infty$, ${LB}^0=-\infty$, $V^0=\{\bu^0\}, k=0$
            \item[$\rm{(3)}$] While $\frac{{UB}^k-{LB}^k}{\max(\min(|{LB}^k|,|{UB}^k|),1)}>\epsilon$
            \begin{itemize}
            \item[$\rm{(a)}$] Update $k\leftarrow k+1$.
            \item[$\rm{(b)}$] Compute $\bx^k=\argmin_{\bx\in X} \underline{Z}(\bx,V^{k-1})$ and the correcponding lower bound value ${LB}^k=\underline{Z}(\bx^k,V^{k-1})$.
            \item[$\rm{(c)}$] Compute $\bu^{k}=\overline{\bu}(\bx^k)$ and the corresponding upper bound ${UB}^k=\overline{Z}(\bx^k)\equiv\underline{Z}(\bx^k,\{\bu^k\})$.
            \item[$\rm{(d)}$] Update $V^{k}\leftarrow V^{k-1}\bigcup\{\bu^{k}\}$.           
            \end{itemize}
            \item[$\rm{(4)}$] Return $\bx^k$, ${UB}^{k}$.
        \end{itemize} 
\vskip 2mm
\end{algorithm}

The algorithm is general in the sense that if $\bx$ is not feasible, \ie there exists a $\bu\in U$ such that $Y(\bx,\bu)=\emptyset$, then at Step 3c, $\bu^k$ would return such a $\bu$ and $UB=\infty$. Finding $\overline{Z}(\bx)$ and $\overline{\bu}(\bx)$ in the case that Assumption~\ref{ass:full recourse} holds requires only an optimality oracle for the problem $\max_{\bu\in U}\underline{Z}(\bx,\{\bu\})$, since for all $\bx\in X$ we have that $\overline{Z}(\bx)<\infty$. However, if the less restrictive Assumption~\ref{ass:feasibility} holds true, then the algorithm additionally requires a feasibility oracle, which determines if $\bx$ is feasible. In general, even for a feasible $\bx$ (such that $Y(\bx,\bu)\neq \emptyset$ for any $\bu\in U$), computing $\overline{Z}(\bx)$ and $\overline{\bu}(\bx)$ is NP-Hard. \citet{Zeng2013} suggest a general optimality oracle based on complementary slackness, however, they do not present a general feasibility oracle.
Proposition~\ref{prop:CCG_equiv} and its proof (which we add for the sake of completeness) presents their suggested optimality oracle.
\begin{proposition}\label{prop:CCG_equiv}
Let
\begin{equation}\label{eq:SecondStage_WorstCaseCost}
\begin{aligned}[t]
&\tilde{Z}(\bx)=\ba^T\bx+\max_{\bu\in U,\by\in\real^m_+,\bw\in \real_+^r} \bb^T\by\\
&\begin{array}{crl}\qquad\qquad\qquad \textnormal{s.t.} & \bba\bx+\bbb\by+\bbc\bu&\geq \bc\\
& \bbb^T\bw&\leq \bb\\
& \bw^T(\bba\bx+\bbb\by+\bbc\bu-\bc)&=0\\
& \by^T(\bb-\bbb^T\bw)&=0,\end{array}
\end{aligned}
\end{equation}
with a corresponding maximizer $\tilde{\bu}(\bx)$ (not necessarily unique). If $\bx\in X$ is a feasible solution to Problem \eqref{eq:2stage_robust}, \ie $\overline{Z}(\bx)<\infty$, then  Problems  \eqref{eq:SecondStage_UpperBound} and \eqref{eq:SecondStage_WorstCaseCost} are equivalent, \ie
$\overline{Z}(\bx)=\tilde{Z}(\bx)$ and they have the same optimal solution set. 
Moreover, the two equality constraints can be reformulated as $r+m$ SOS-1 constraints, or using additional $m+r$ binary variables 
as
\begin{equation}\label{eq:SecondStage_WorstCaseMIO}
\begin{aligned}[t]
&\max_{\substack{\bu\in U,\by\in\real^m_+,\bw\in \real_+^r,\\
		\balpha\in\{0,1\}^r,\bbeta\in\{0,1\}^m}} \bb^T\by\\
&\begin{array}{crl}\qquad \textnormal{s.t.} & \bba\bx+\bbb\by+\bbc\bu&\geq \bc\\
& \bbb^T\bw&\leq \bb\\
& \bw&\leq \mathcal{M}\balpha\\
& \bba\bx+\bbb\by+\bbc\bu-\bc&\leq \mathcal{M}(1-\balpha)\\
& \by&\leq \mathcal{M}\bbeta\\
&\bb-\bbb^T\bw&\leq\mathcal{M}(1-\bbeta),\end{array}
\end{aligned}
\end{equation}
where $\mathcal{M}$ is a sufficiently large number.
\end{proposition}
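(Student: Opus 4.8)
The plan is to view the inner minimization in \eqref{eq:SecondStage_UpperBound} as, for fixed $\bx$ and $\bu$, the linear program that defines $\underline{Z}(\bx,\{\bu\})$ in \eqref{eq:SecondStage_Decision}, and to describe its optimal solutions through linear programming duality and complementary slackness. First I would fix $\bx\in X$ with $\overline{Z}(\bx)<\infty$ and an arbitrary $\bu\in U$ and look at the primal LP $\min\{\bb^T\by:\bbb\by\geq\bc-\bba\bx-\bbc\bu,\ \by\geq\bzero\}$, whose dual is $\max\{(\bc-\bba\bx-\bbc\bu)^T\bw:\bbb^T\bw\leq\bb,\ \bw\geq\bzero\}$. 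Feasibility of $\bx$ gives $Y(\bx,\bu)\neq\emptyset$, so the primal is feasible; and since Problem~\eqref{eq:2stage_robust} is assumed bounded, the dual feasible set $\{\bw\geq\bzero:\bbb^T\bw\leq\bb\}$ cannot be empty (otherwise the inner LP would be unbounded below for every $\bu$, forcing $\overline{Z}(\bx')=-\infty$ for every feasible $\bx'$). Hence for each $\bu\in U$ both LPs are feasible, strong duality holds with both optima attained, and by the complementary slackness theorem a primal-feasible $\by$ and dual-feasible $\bw$ are simultaneously optimal if and only if $\bw^T(\bba\bx+\bbb\by+\bbc\bu-\bc)=0$ and $\by^T(\bb-\bbb^T\bw)=0$.

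Next I would match up the feasible sets of the two optimization problems. A triple $(\bu,\by,\bw)$ is feasible in \eqref{eq:SecondStage_WorstCaseCost} exactly when $\by$ is an optimal solution of the inner LP at $\bu$ with dual certificate $\bw$; in that case $\bb^T\by=\underline{Z}(\bx,\{\bu\})-\ba^T\bx$. Conversely, for every $\bu\in U$, pairing an optimal $\by$ with an optimal dual $\bw$ yields a feasible triple. Taking the maximum over $\bu\in U$ then gives $\tilde{Z}(\bx)=\ba^T\bx+\max_{\bu\in U}\bigl(\underline{Z}(\bx,\{\bu\})-\ba^T\bx\bigr)=\max_{\bu\in U}\underline{Z}(\bx,\{\bu\})=\overline{Z}(\bx)$. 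The same correspondence between feasible triples and (uncertainty, optimal second-stage decision, optimal dual) tuples shows that the $\bu$-components of the maximizers of \eqref{eq:SecondStage_WorstCaseCost} are precisely the worst-case realizations $\overline{\bu}(\bx)$ of \eqref{eq:SecondStage_WorstCaseRealization}, and that for each such $\bu$ the admissible $\by$ are exactly the optimal second-stage decisions, so the optimal solution sets coincide in the stated sense.

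Finally I would handle the reformulation of the two bilinear equality constraints. Because $\bw\geq\bzero$ and $\bba\bx+\bbb\by+\bbc\bu-\bc\geq\bzero$ hold componentwise, $\bw^T(\bba\bx+\bbb\by+\bbc\bu-\bc)=0$ is equivalent to requiring, for each $i\in\{1,\ldots,r\}$, that at least one of $w_i$ and $(\bba\bx+\bbb\by+\bbc\bu-\bc)_i$ vanish, i.e.\ to $r$ SOS-1 constraints; likewise $\by^T(\bb-\bbb^T\bw)=0$ becomes $m$ SOS-1 constraints, for a total of $r+m$. For the big-$\mathcal{M}$ form I would add binaries $\alpha_i,\beta_j\in\{0,1\}$ and impose $w_i\leq\mathcal{M}\alpha_i$, $(\bba\bx+\bbb\by+\bbc\bu-\bc)_i\leq\mathcal{M}(1-\alpha_i)$, $y_j\leq\mathcal{M}\beta_j$, $(\bb-\bbb^T\bw)_j\leq\mathcal{M}(1-\beta_j)$: for any $\mathcal{M}$ large enough these are logically equivalent to the two complementarity conditions, which recovers exactly \eqref{eq:SecondStage_WorstCaseMIO}. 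To justify that a \emph{finite} $\mathcal{M}$ suffices, I would observe that the maximum in \eqref{eq:SecondStage_WorstCaseCost} is attained at a triple with $\bu\in U$ (compact), $\by$ a basic optimal primal solution, and $\bw$ a basic optimal dual solution, all lying in a bounded set.

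I expect the main obstacle to be the bookkeeping around boundedness: carefully arguing that $\overline{Z}(\bx)<\infty$ together with the standing boundedness assumption makes the inner LP bounded for every $\bu\in U$ (so strong duality and complementary slackness are genuinely available), and, for the big-$\mathcal{M}$ reformulation, pinning down that attention may be restricted to a bounded set of $(\bu,\by,\bw)$ so that a finite $\mathcal{M}$ is valid. The duality/complementary-slackness core of the equivalence $\overline{Z}(\bx)=\tilde{Z}(\bx)$ is then essentially immediate.
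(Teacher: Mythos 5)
Your proposal is correct and follows essentially the same route as the paper: characterize the optimal primal--dual pairs of the inner LP via strong duality and complementary slackness, identify the feasible triples $(\bu,\by,\bw)$ of \eqref{eq:SecondStage_WorstCaseCost} with those pairs, and maximize over $\bu\in U$. Your handling of boundedness (showing the dual feasible set $\{\bw\geq\bzero:\bbb^T\bw\leq\bb\}$ must be nonempty, so the inner LP is bounded for every $\bu$) is a slightly cleaner variant of the paper's argument, which instead restricts to the set $U(\bx)$ where the inner LP is bounded and notes the complementarity system is infeasible elsewhere; both are valid.
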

\begin{proof}{Proof.}
The equivalence between \eqref{eq:SecondStage_WorstCaseCost} and \eqref{eq:SecondStage_WorstCaseMIO} stems from a known conversion between complementarity and Big-M type constraints. We therefore focus on proving the equivalence between \eqref{eq:SecondStage_UpperBound} and \eqref{eq:SecondStage_WorstCaseCost}.
 
Since Problem \eqref{eq:2stage_robust} is bounded, then for every $\bx$ there exists $\bu$ such that $\underline{Z}(\bx,\{\bu\})$ is bounded from below. Therefore, denoting $U(\bx)=\{\bu\in U:\underline{Z}(\bx,\{\bu\})>-\infty\}$ we have that
$$\max_{\bu\in U}\underline{Z}(\bx,\{\bu\})=\max_{\bu\in U(\bx)}\underline{Z}(\bx,\{\bu\}).$$
Since $\bx\in X$ is feasible for the problem, then $Y(\bx,\bu)\neq \emptyset$ for any $\bu\in U$ and the second-stage Problem \eqref{eq:SecondStage_Decision} is always feasible, \ie ${\underline{Z}(\bx,\{\bu\})<\infty}$.

Since Problem \eqref{eq:SecondStage_Decision} is feasible and bounded, according to linear duality theory  the following dual problem is also feasible and bounded
\begin{equation*}
\max_{\substack{\bw\in \real_+^r:\bbb^T\bw\leq \bb}} (\bba\bx-\bbc\bu)^T\bw.
\end{equation*}
Moreover, any \emph{feasible} primal-dual pair $(\by,\bw)$ satisfying the complementary-slackness conditions 
\begin{align}
\bw^T(\bba\bx+\bbb\by+\bbc\bu-\bc)&=0,\label{eq:CS1}\\
\by^T(\bb-\bbb^T\bw)&=0\label{eq:CS2},
\end{align}
is an \emph{optimal} primal-dual pair. We will denote the set of pairs $(\by,\bw)$ that satisfy \eqref{eq:CS1}-\eqref{eq:CS2} as $C(\bx,\bu)$.
Thus, we have that
\begin{equation}\label{eq:SecondStage_CSequiv}\underline{Z}(\bx,\{\bu\})=\ba^T\bx+\bb^T\by\quad \forall \by\in Y(\bx,\bu): \exists \bw\in\real^r_+,\bbb^T\bw\leq \bb,(\by,\bw)\in C(\bx,\bu).\end{equation}
Since the RHS of Equation \eqref{eq:SecondStage_CSequiv} is fixed for any $\by$ satisfying the constraints, we have
\begin{equation}\label{eq:SecondStage_CSequiv2}\underline{Z}(\bx,\{\bu\})=\ba^T\bx+\max_{\substack{\by\in Y(\bx,\bu),\bw\in\real^r_+,\\
\bbb^T\bw\leq \bb,(\by,\bw)\in C(\bx,\bu)}}\bb^T\by.\end{equation}
Notice that $\bu$ for which $\underline{Z}(\bx,\{\bu\})$ is unbounded will result in the RHS of \eqref{eq:SecondStage_CSequiv2} being infeasible, thus the equality holds even for $\bu\notin U(\bx)$. 
Maximizing both sides of equation \eqref{eq:SecondStage_CSequiv2} over all $\bu\in U$ leads to the equivalence between \eqref{eq:SecondStage_UpperBound} and \eqref{eq:SecondStage_WorstCaseCost}. 
\end{proof}

Next, in Proposition~\ref{prop:CCG_converge} we show convergence of the CCG to the optimal first-stage decision in a finite number of steps for a polyhedral $U$, provided Assumption~\ref{ass:full recourse} holds. For the proof of this result we will first need the following auxiliary lemma.
\begin{lemma}\label{lemma:Maximizer_ExtremePoint}
Let $\bx\in X$ be a feasible solution to Problem~\eqref{eq:2stage_robust}. If $U$ is a compact set, then there always exists $\overline{\bu}(\bx)$ which is an extreme point of $U$.
\end{lemma}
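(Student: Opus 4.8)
The plan is to use two facts: for a fixed \emph{feasible} first-stage decision $\bx$, the second-stage value $\underline{Z}(\bx,\{\bu\})$ is a convex function of $\bu$, and a convex function over a polytope attains its maximum at an extreme point. (Here $U$ is a polytope of the form \eqref{eq:Define_U}, so in particular it has finitely many extreme points.)

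First I would record what feasibility of $\bx$ gives us. Since $Y(\bx,\bu)\neq\emptyset$ for every $\bu\in U$, the inner minimization defining $\underline{Z}(\bx,\{\bu\})$ is always feasible, so $\underline{Z}(\bx,\{\bu\})<\infty$ on $U$; and since Problem~\eqref{eq:2stage_robust} is bounded, $\overline{Z}(\bx)=\max_{\bu\in U}\underline{Z}(\bx,\{\bu\})$ must be finite, since otherwise $\min_{\bx'\in X}\overline{Z}(\bx')=-\infty$ and the problem would be unbounded below. In particular the values $\underline{Z}(\bx,\{\bu\})$ that matter are finite. (This is exactly the reasoning already used in the proof of Proposition~\ref{prop:CCG_equiv}.)

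Second, I would show that $\bu\mapsto \underline{Z}(\bx,\{\bu\})$ is convex on $U$. The quickest self-contained argument: given $\bu^{1},\bu^{2}\in U$, $\lambda\in[0,1]$, and $\by^{i}\in Y(\bx,\bu^{i})$ with $\ba^T\bx+\bb^T\by^{i}$ within $\epsilon$ of $\underline{Z}(\bx,\{\bu^{i}\})$, the convex combination $\lambda\by^{1}+(1-\lambda)\by^{2}$ is nonnegative and satisfies $\bba\bx+\bbb(\lambda\by^{1}+(1-\lambda)\by^{2})+\bbc(\lambda\bu^{1}+(1-\lambda)\bu^{2})\geq\bc$, hence lies in $Y\big(\bx,\lambda\bu^{1}+(1-\lambda)\bu^{2}\big)$; evaluating the objective on it and letting $\epsilon\downarrow0$ yields $\underline{Z}(\bx,\{\lambda\bu^{1}+(1-\lambda)\bu^{2}\})\leq\lambda\underline{Z}(\bx,\{\bu^{1}\})+(1-\lambda)\underline{Z}(\bx,\{\bu^{2}\})$. (Alternatively, by LP strong duality as in the proof of Proposition~\ref{prop:CCG_equiv}, $\underline{Z}(\bx,\{\bu\})=\ba^T\bx+\max_{\bw\geq\bzero,\ \bbb^T\bw\leq\bb}(\bc-\bba\bx-\bbc\bu)^T\bw$, a supremum of affine functions of $\bu$, hence convex.) Then I would extend this to arbitrary finite convex combinations by induction (Jensen's inequality).

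Third, I would write $U=\conv\{\bv^{1},\dots,\bv^{P}\}$ for its finitely many extreme points, express any $\bu\in U$ as $\bu=\sum_{p}\lambda_{p}\bv^{p}$ with $\lambda_{p}\geq0$ and $\sum_{p}\lambda_{p}=1$, and use convexity to get $\underline{Z}(\bx,\{\bu\})\leq\sum_{p}\lambda_{p}\underline{Z}(\bx,\{\bv^{p}\})\leq\max_{1\leq p\leq P}\underline{Z}(\bx,\{\bv^{p}\})$. Since each $\bv^{p}\in U$, this shows $\overline{Z}(\bx)=\max_{\bu\in U}\underline{Z}(\bx,\{\bu\})=\max_{1\leq p\leq P}\underline{Z}(\bx,\{\bv^{p}\})$, a maximum over a finite set, hence attained at some $\bv^{p^{\star}}$; thus $\overline{\bu}(\bx)$ may be taken to be the extreme point $\bv^{p^{\star}}$. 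The argument is short, and the only points that need care are the finiteness of the $\underline{Z}(\bx,\{\bv^{p}\})$ (so that the convexity/Jensen chain and the final maximum are meaningful), which is where feasibility of $\bx$ together with boundedness of the problem enters, and the observation that attainment of the maximizer genuinely uses that $U$ is a polytope (finitely many extreme points) rather than merely compact.
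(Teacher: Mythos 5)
Your argument is correct for the case it treats, but it takes a genuinely different route from the paper, and it proves slightly less than the lemma actually claims. The paper first secures attainment of the maximizer from continuity of $\underline{Z}(\bx,\{\cdot\})$ and compactness of $U$, then argues by contradiction: using LP strong duality it rewrites $\underline{Z}(\bx,\{\overline{\bu}(\bx)\})$ as $\ba^T\bx+\max_{\bw\geq\bzero,\,\bbb^T\bw\leq\bb}(\bc-\bba\bx-\bbc\overline{\bu}(\bx))^T\bw$, fixes an optimal dual $\bw^*$, and observes that the \emph{linear} function $\bu\mapsto(\bc-\bba\bx-\bbc\bu)^T\bw^*$ attains its maximum over the compact convex set $U$ at an extreme point $\bu^*$, which by weak duality is itself a worst-case realization. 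You instead establish convexity of $\bu\mapsto\underline{Z}(\bx,\{\bu\})$ (either by averaging near-optimal recourse decisions or via the same dual representation, as a supremum of affine functions) and then apply Jensen's inequality over the vertex representation $U=\conv\{\bv^1,\dots,\bv^P\}$. Both are sound; your route makes the convexity of the value function explicit and is arguably more transparent, while the paper's duality trick buys generality, since maximizing a linear functional over an arbitrary compact convex set at an extreme point is standard and does not require a finite vertex list.

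That generality is the one substantive issue. The lemma is stated for any compact (convex) $U$, and it is invoked in that generality in the first branch of the proof of Proposition~\ref{prop:CCG_converge}, where $U$ is only assumed convex and compact and the argument uses that each $\overline{\bu}(\bx^k)$ is an extreme point of $U$. Your Jensen-over-finitely-many-vertices step, which you yourself flag as using the polytope structure of \eqref{eq:Define_U}, does not cover that case: for a general compact convex $U$ the extreme points need not be finite in number (nor closed as a set), so "max over the vertices" is not available as written. The fix is either to invoke Bauer's maximum principle (an upper semicontinuous convex function on a compact convex set attains its maximum at an extreme point), or to linearize in $\bu$ via the dual as the paper does. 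A minor additional quibble: your justification that $\overline{Z}(\bx)<\infty$ ("otherwise the problem would be unbounded below") is garbled, since $\overline{Z}(\bx)=+\infty$ would not make the minimum $-\infty$; but this is harmless because the paper \emph{defines} feasibility of $\bx$ as $\overline{Z}(\bx)<\infty$, so finiteness is part of the hypothesis rather than something to derive.
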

\begin{proof}{Proof.}
Since $\bx\in X$ is a feasible first-stage decision then $\overline{Z}(\bx)<\infty$. The function $\underline{Z}(\bx,\{\bu\})$ is continuous in $\bu$ and $U$ is compact; therefore, the maximizer $\overline{\bu}(\bx)$ defined in \eqref{eq:SecondStage_WorstCaseRealization} is attained.
Let us assume to the contrary, that all maximizers $\overline{\bu}(\bx)$ are not an extreme point of $U$. Since $Y(\bx,\overline{\bu}(\bx) )$ is nonempty and the objective function is bounded, by strong duality we have that
$$\overline{Z}(\bx)=\ba^T\bx+\min_{\by\in Y(\bx,\overline{\bu}(\bx))} \bb^T\by=\ba^T\bx+\max_{\bw\in\real^r_+:\bbb^T\bw\leq \bb} (\bc-\bba\bx-\bbc\overline{\bu}(\bx))^T\bw,$$
and the RHS maximum is attained.
Let $\bw^*\in \argmax_{\bw\in\real^r_+:\bbb^T\bw\leq \bb} (\bc-\bba\bx-\bbc\overline{\bu}(\bx))^T\bw$. We have that
\begin{equation}\label{eq:thm2_eq1} (\bc-\bba\bx-\bbc\overline{\bu}(\bx))^T\bw^*\leq\max_{\bu\in U} (\bc-\bba\bx-\bbc\bu)^T\bw^*.\end{equation}
Since the RHS of \eqref{eq:thm2_eq1} is a maximization of a linear function over a convex compact domain, and it is trivially feasible ($U\neq\emptyset$) and bounded (since $\overline{Z}(\bx)<\infty$), there must exist a maximizer $\bu^*\in  \argmax_{\bu\in U}(\bc-\bba\bx-\bbc\bu)^T\bw^*$ that is an extreme point of $U$. By definition 
$$\overline{Z}(\bx)\geq \ba^T\bx+(\bc-\bba\bx-\bbc\bu^*)^T\bw^*.$$ Thus, choosing $\overline{\bu}(\bx)=\bu^*$ leads to the desired contradiction.
\end{proof}
\begin{proposition}[Extension of {\citep[Proposition 2]{Zeng2013}}]\label{prop:CCG_converge}
Let Assumption~\ref{ass:full recourse} hold, and let $\tilde{Z}(\cdot)$ and $\tilde{\bu}(\cdot)$ (defined in \eqref{eq:SecondStage_WorstCaseCost}) be used  in Algorithm~\ref{alg:CCG} instead of $\overline{Z}(\cdot)$ and $\overline{\bu}(\cdot)$, respectively. If $U$ is convex, and both $U$ and $X$ are compact, then any limit point of the sequence $\{\bx^k\}_{k\in\mathbb{N}}$ is an optimal solution of Problem~\eqref{eq:2stage_robust}.  If alternatively $U$ is a polytope, then $\overline{\bu}(\bx)$ can always be chosen to be a vertex of $U$, and Algorithm~\ref{alg:CCG} terminates in a finite number of steps.
\end{proposition}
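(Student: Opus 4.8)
The plan is the classical cutting‑plane convergence argument, split to match the two assertions. Throughout I use that, under Assumption~\ref{ass:full recourse} and boundedness of \eqref{eq:2stage_robust}, the parametric LP value function $\underline{Z}(\bx,\{\bu\})=\ba^T\bx+\min_{\by\in Y(\bx,\bu)}\bb^T\by$ is finite and jointly continuous on $X\times U$ (it equals $\ba^T\bx+\max_{\bw\in W}(\bc-\bba\bx-\bbc\bu)^T\bw$ with $W=\{\bw\ge\bzero:\bbb^T\bw\le\bb\}$, hence a maximum of finitely many affine functions of $(\bx,\bu)$ wherever it is finite, so convex, piecewise–linear and continuous there), and that $\overline{Z}(\bx)=\max_{\bu\in U}\underline{Z}(\bx,\{\bu\})$ is then finite and continuous on the compact set $X$. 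By Proposition~\ref{prop:CCG_equiv}, since every $\bx\in X$ is feasible, $\tilde Z(\bx^k)=\overline{Z}(\bx^k)$ with the same maximizer set, so I write $\overline{Z},\overline{\bu}$ for the quantities computed in Step~3c. Fix $\bx^\star\in\argmin_{\bx\in X}\overline{Z}(\bx)$ and set $Z^\star=\overline{Z}(\bx^\star)$, the optimal value of \eqref{eq:2stage_robust}. Two monotonicity facts are used: because \eqref{Prob:LowerBound_SecondStage} decouples the scenarios, $\underline{Z}(\bx,V)=\ba^T\bx+\max_{\bu\in V}(\underline{Z}(\bx,\{\bu\})-\ba^T\bx)$ is non‑decreasing in $V$, and since $V^{k-1}\subseteq V^k$ the bounds $LB^k=\min_{\bx\in X}\underline{Z}(\bx,V^{k-1})$ are non‑decreasing; evaluating at $\bx^\star$ gives $LB^k\le\underline{Z}(\bx^\star,V^{k-1})\le\overline{Z}(\bx^\star)=Z^\star$, so $LB^k\nearrow LB^\infty\le Z^\star$, while $UB^k=\overline{Z}(\bx^k)\ge Z^\star$ and $UB^k\ge LB^k$.

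For the first claim, let $\bx^\ast$ be a limit point of $\{\bx^k\}$; pass to a subsequence with $\bx^{k_j}\to\bx^\ast$ and, $U$ being compact, refine so that $\bu^{k_j}\to\bu^\ast\in U$; note $\bx^\ast\in X$ since $X$ is compact. For subsequence indices $j<i$ we have $\bu^{k_j}\in V^{k_j}\subseteq V^{k_i-1}$, so
$$LB^{k_i}=\underline{Z}(\bx^{k_i},V^{k_i-1})\ge\underline{Z}(\bx^{k_i},\{\bu^{k_j}\}).$$
Fixing $j$ and letting $i\to\infty$ yields $LB^\infty\ge\underline{Z}(\bx^\ast,\{\bu^{k_j}\})$; letting $j\to\infty$ gives $LB^\infty\ge\underline{Z}(\bx^\ast,\{\bu^\ast\})$. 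On the other hand $UB^{k_j}=\underline{Z}(\bx^{k_j},\{\bu^{k_j}\})\to\underline{Z}(\bx^\ast,\{\bu^\ast\})$ by joint continuity and $UB^{k_j}\ge LB^{k_j}\to LB^\infty$, so $\underline{Z}(\bx^\ast,\{\bu^\ast\})=LB^\infty=\lim_j UB^{k_j}$. Finally, since $\bu^{k_j}$ maximizes $\underline{Z}(\bx^{k_j},\{\cdot\})$ over $U$, for every $\bu\in U$ we have $UB^{k_j}=\overline{Z}(\bx^{k_j})\ge\underline{Z}(\bx^{k_j},\{\bu\})$, and passing to the limit gives $LB^\infty=\underline{Z}(\bx^\ast,\{\bu^\ast\})\ge\underline{Z}(\bx^\ast,\{\bu\})$ for all $\bu\in U$, i.e. $\overline{Z}(\bx^\ast)=LB^\infty\le Z^\star$. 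Since $\bx^\ast\in X$ is feasible, also $\overline{Z}(\bx^\ast)\ge Z^\star$, so $\overline{Z}(\bx^\ast)=Z^\star$ and $\bx^\ast$ is optimal.

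For the second claim, suppose $U$ is a polytope. Lemma~\ref{lemma:Maximizer_ExtremePoint} lets us take $\bu^k=\overline{\bu}(\bx^k)$ (equivalently $\tilde{\bu}(\bx^k)$, by Proposition~\ref{prop:CCG_equiv}) to be a vertex of $U$, hence $V^k\subseteq\mathrm{vert}(U)$, a finite set, for every $k$. It suffices to show $V^k$ grows strictly whenever the stopping test fails: if $UB^k>LB^k$ then $\bu^k\notin V^{k-1}$, since otherwise $LB^k=\underline{Z}(\bx^k,V^{k-1})\ge\underline{Z}(\bx^k,\{\bu^k\})=\overline{Z}(\bx^k)=UB^k$, a contradiction. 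Thus each non‑terminating iteration adds a new vertex to $V$, which can happen at most $|\mathrm{vert}(U)|$ times, so the algorithm stops after finitely many iterations. (Run with $\epsilon=0$ it stops with $UB^k=LB^k$, and then $LB^k\le Z^\star\le UB^k$ forces $\bx^k$ to be optimal, consistent with the first claim applied to the last iterate.)

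The main obstacle is the continuity of the parametric value functions that drives all the limit‑passing in the first part: under Assumption~\ref{ass:full recourse} and boundedness they are finite, convex and piecewise‑linear on $X\times U$, hence continuous there, but a fully rigorous treatment — in particular joint continuity of $\underline{Z}(\bx,\{\bu\})$ — requires care with the dual feasible set $W$, its pointedness (so that it has vertices), and its recession cone (so that finiteness is a polyhedral condition in $(\bx,\bu)$). By contrast, once Lemma~\ref{lemma:Maximizer_ExtremePoint} is available the finite‑termination argument is essentially combinatorial.
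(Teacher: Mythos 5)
Your proof is correct. The finite-termination half is essentially the paper's argument: Lemma~\ref{lemma:Maximizer_ExtremePoint} lets each $\bu^k$ be taken as a vertex of $U$, and a non-terminating iteration must add a vertex not already in $V^{k-1}$ (you make the ``otherwise $LB^k\ge UB^k$'' step explicit, which the paper only gestures at via the remark that $V^{k-1}=V^k$ iff $\bu^k\in V^{k-1}$). The limit-point half, however, takes a genuinely different route. The paper posits a Lipschitz bound $|\underline{Z}(\bx,V)-\underline{Z}(\bx,W)|\le L_1 d_H(V,W)$ in the Hausdorff distance, uses monotone convergence of the sets $\conv(V^k)$ inside the compact $U$ to force $d_H(\conv(V^{k}),\conv(V^{k-1}))\to 0$, and telescopes to show that the limit of the lower bounds equals $\overline{Z}(\bar{\bx})$ at a limit point $\bar{\bx}$. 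You instead run the classical Kelley cutting-plane argument: extract a convergent subsequence of $(\bx^{k_j},\bu^{k_j})$ and exploit that the cut generated at iteration $k_j$ constrains every later master problem, so $LB^{k_i}\ge\underline{Z}(\bx^{k_i},\{\bu^{k_j}\})$ for $i>j$, then pass to the limit twice. Your route needs only pointwise joint continuity of $(\bx,\bu)\mapsto\underline{Z}(\bx,\{\bu\})$ rather than the uniform Hausdorff--Lipschitz property of $V\mapsto\underline{Z}(\bx,V)$, and it is more careful about which limits are taken along which subsequence; the paper's route is shorter once the Lipschitz estimate is granted and speaks directly about the limit of the whole sequence of bounds. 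Both proofs ultimately rest on the same regularity of the parametric LP value function under Assumption~\ref{ass:full recourse} and boundedness; you flag this explicitly (pointedness of the dual feasible set $W$, so that the finite value is a maximum of finitely many affine functions of $(\bx,\bu)$), whereas the paper asserts the existence of the Lipschitz constants by compactness without further justification. No gap.
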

The proof of the proposition is given in Appendix~\ref{appx:convergence proof}.

Notice that the optimality oracle given in Proposition~\ref{prop:CCG_equiv} is only valid for feasible first-stage decisions. Moreover, Proposition~\ref{prop:CCG_converge} relies on the fact that $\bu^k$ generated by the algorithm are vertices of the polytope $U$. Proposition \ref{prop:CCG_Limitation} shows that applying the optimality oracle in Proposition~\ref{prop:CCG_equiv} for cases where Assumption~\ref{ass:full recourse} does not hold may result in an underestimation of $\overline{Z}(\bx^k)$ and/or a $\bu^k$ which is not a vertex of $U$. As a direct result Algorithm~\ref{alg:CCG} might not converge in a finite number of steps or might converge to an infeasible solution.
\begin{proposition}\label{prop:CCG_Limitation}
Let $\bx^k$ be a first-stage decision generated at iteration $k$ of Algorithm~\ref{alg:CCG} using $\tilde{Z}(\cdot)$ instead of $\overline{Z}(\cdot)$ and $\tilde{\bu}(\cdot)$ instead of $\overline{\bu}(\cdot)$. If $\bx^k$ is not feasible ($\overline{Z}(\bx^k)=\infty$), then $\tilde{Z}(\bx^k)<\overline{Z}(\bx^k)$.
Moreover, if $U$ is a polytope, then there does not necessarily exist a $\tilde{\bu}(\bx^k)$ that is a vertex of $U$.
\end{proposition}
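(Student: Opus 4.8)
The plan is to establish the two claims of Proposition~\ref{prop:CCG_Limitation} separately, both by exhibiting the structural consequence of feasibility failure. For the first claim, suppose $\bx^k$ is infeasible, so there is some $\hat\bu\in U$ with $Y(\bx^k,\hat\bu)=\emptyset$; by LP duality applied to the (infeasible) second-stage problem $\min_{\by\in Y(\bx^k,\hat\bu)}\bb^T\by$, the dual $\max\{(\bc-\bba\bx^k-\bbc\hat\bu)^T\bw:\bw\in\real^r_+,\bbb^T\bw\leq\bb\}$ is unbounded, and hence $\overline{Z}(\bx^k)=+\infty$. I would then argue that the optimization in \eqref{eq:SecondStage_WorstCaseCost} defining $\tilde{Z}(\bx^k)$ is over a feasible region in which $\by$ is forced (by the first and fourth constraints together with $\by\geq\bzero,\bbb^T\bw\leq\bb$, and the complementarity \eqref{eq:CS1}–\eqref{eq:CS2}) to be an optimal primal solution of a \emph{feasible} second-stage LP, so in particular $\bb^T\by$ equals the finite optimal value of that LP whenever the region is nonempty. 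Since $X,U$ are compact and the constraint set of \eqref{eq:SecondStage_WorstCaseCost} is closed, $\tilde{Z}(\bx^k)$ — being either $-\infty$ (empty region, which cannot occur since for $\bu$ in the interior-feasible part the region is nonempty) or a finite maximum of the continuous bounded objective $\ba^T\bx^k+\bb^T\by$ over a compact set — is finite. Therefore $\tilde{Z}(\bx^k)<\infty=\overline{Z}(\bx^k)$, which is the strict inequality claimed.

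For the second claim I would give an explicit small counterexample. The point of Lemma~\ref{lemma:Maximizer_ExtremePoint} is that for \emph{feasible} $\bx$ the maximizer of $\underline{Z}(\bx,\{\bu\})$ over the polytope $U$ can be taken at a vertex; the proof there used strong duality and the fact that the objective, once the inner minimum is dualized, becomes linear in $\bu$. When $\bx^k$ is infeasible, the feasible region of \eqref{eq:SecondStage_WorstCaseCost} typically excludes all the vertices of $U$ — precisely those vertices $\bu$ for which $Y(\bx^k,\bu)=\emptyset$ — leaving only a proper face or relative-interior portion of $U$ where a feasible $(\by,\bw)$ complementary pair exists, and the constrained maximum of $\bb^T\by$ over that set can be attained only at a non-vertex point of $U$. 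I would construct a two- or three-dimensional instance with, say, $U$ a triangle, such that $Y(\bx^k,\bu)=\emptyset$ exactly at the three vertices but nonempty on an edge or interior segment, and verify by direct computation that every maximizer $\tilde\bu(\bx^k)$ of \eqref{eq:SecondStage_WorstCaseCost} lies strictly inside an edge of $U$.

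The main obstacle I expect is the first claim: one must be careful that $\tilde{Z}(\bx^k)$ is genuinely finite rather than $+\infty$. The subtlety is that \eqref{eq:SecondStage_WorstCaseCost} is a \emph{maximization}, so one might worry the extra variable $\bw$ with only the one-sided bound $\bbb^T\bw\leq\bb$ allows $\bb^T\by$ to blow up; the resolution is that the complementarity constraints \eqref{eq:CS1}–\eqref{eq:CS2} together with primal feasibility force $(\by,\bw)$ to be an optimal pair for a second-stage LP that is itself bounded below (by the standing boundedness hypothesis on Problem~\eqref{eq:2stage_robust}) whenever it is feasible, so $\bb^T\by$ is pinned to that finite optimal value; combined with compactness of $U$ and of $X$, the maximum defining $\tilde{Z}(\bx^k)$ is attained and finite. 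Once this is nailed down, the strict inequality $\tilde{Z}(\bx^k)<\overline{Z}(\bx^k)=\infty$ is immediate, and the non-vertex claim follows from the counterexample.
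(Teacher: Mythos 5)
Your argument for the first claim is, in substance, the paper's own: both reduce the strict inequality to showing $\tilde{Z}(\bx^k)<\infty$, and both obtain finiteness from the observation that the complementarity constraints \eqref{eq:CS1}--\eqref{eq:CS2} pin $\bb^T\by$ to the optimal value of the second-stage LP \eqref{eq:SecondStage_Decision}, so that any $\bu$ with $Y(\bx^k,\bu)=\emptyset$ contributes $-\infty$ to the maximization rather than $+\infty$. One technical slip: the feasible region of \eqref{eq:SecondStage_WorstCaseCost} is \emph{not} compact ($\bw$, and in general $\by$, are unbounded), so you cannot literally invoke ``a continuous function attains a finite maximum on a compact set.'' The clean finish is the one implicit in the paper's display \eqref{eq:CS_breakdown}: on the polyhedral set $U(\bx^k)=\paren{\bu\in U: Y(\bx^k,\bu)\neq\emptyset}$ the objective equals $\underline{Z}(\bx^k,\{\bu\})-\ba^T\bx^k$, a convex piecewise-linear function of $\bu$, hence bounded above on the compact polytope $U\cap U(\bx^k)$; so $\tilde Z(\bx^k)$ is finite or $-\infty$, and either way strictly below $\overline{Z}(\bx^k)=\infty$. (In particular your worry about whether the region could be empty is immaterial to the stated inequality.)

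On the second claim you take a genuinely different route. The paper argues structurally: the maximizer of \eqref{eq:SecondStage_WorstCaseCost} is attained at a vertex of the lifted polytope $\{(\bu,\by):\bba\bx^k+\bbb\by+\bbc\bu\geq\bc,\ \bbd\bu\leq\bd\}$, whose projection onto $\bu$-space need not be a vertex of $U$. You instead propose an explicit counterexample in which every vertex of $U$ lies outside $U(\bx^k)$, so the maximizer must be an extreme point of the strictly smaller polytope $U\cap U(\bx^k)$. The idea is sound and, if executed, would be more convincing than the paper's rather terse remark; but as written it is only a plan. A ``does not necessarily exist'' statement requires an actual instance (or the paper's structural observation made precise), and you have not exhibited one --- that is the one genuine gap to close before this counts as a proof.
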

\begin{proof}{Proof.}
Notice that since $\bx^k$ is generated by Algorithm~\ref{alg:CCG}, it must satisfy that
$\underline{Z}(\bx^k,V^{k-1})<\infty$. Moreover, for any $\bu\in U$ we have that
\begin{equation}\label{eq:CS_breakdown}
\max_{\substack{\by\in Y(\bx^k,\bu),\bw\in\real^r_+,\\
\bbb^T\bw\leq \bb,(\by,\bw)\in C(\bx^k,\bu)}}\bb^T\by=\begin{cases}
\underline{Z}(\bx^k,\{\bu\})-\ba^T\bx,& \text{if }Y(\bx^k,\bu)\neq\emptyset\; (\underline{Z}(\bx^k,\{\bu\})<\infty),\\
-\infty, & \text{otherwise}. 
\end{cases}
\end{equation}
Denoting $U(\bx)=\{\bu\in U:Y(\bx,\bu)\neq \emptyset\}=\{\bu\in U:\exists \by\in\real^m_+ ,\; \bba\bx+\bbb\by+\bbc\bv\geq \bc \}$, and 
maximizing the LHS of \eqref{eq:CS_breakdown} over $\bu$ we obtain that
$$\ba^T\bx+\max_{\bu\in U}\max_{\substack{\by\in Y(\bx^k,\bu),\bw\in\real^r_+,\\
\bbb^T\bw\leq \bb,(\by,\bw)\in C(\bx,\bu)}}\bb^T\by=\ba^T\bx+\max_{\substack{\bu\in U\bigcap U(\bx^k),\\
\by\in Y(\bx^k,\bu),\bw\in\real^r_+,\\
\bbb^T\bw\leq \bb,(\by,\bw)\in C(\bx^k,\bu)}} \bb^T\by= \underline{Z}(\bx^k,\{\bu^k\})<\infty.$$
However, if $\bx^k$ is not feasible, we have that $$\tilde{Z}(\bx^k)=\underline{Z}(\bx^k,\{\bu^k\})<\underline{Z}(\bx^k,\overline{\bu}(\bx^k))=\overline{Z}(\bx^k)=\infty.$$

If $U$ is polyhedral, as given in \eqref{eq:Define_U}, the maximizer $\bu^k=\tilde{\bu}(\bx^k)$ (which must exist since $U$ is compact)
corresponds to one of the vertices of the lifted polytope
$$\{(\bu,\by)\in\real^l\times\real^m:\bba\bx^k+\bbb\by+\bbc\bv\geq \bc,\bbd\bu\leq \bd\}$$
which, in general, is not necessarily a vertex of $U$. 
\end{proof}
Thus, the importance of a feasibility oracle in the CCG framework for problems satisfying Assumption~\ref{ass:feasibility} rather than Assumption~\ref{ass:full recourse} is apparent.
In the next subsection we suggest such an oracle based on complementary slackness and discuss its use.
\subsection{Complementarity Based Feasibility Oracle}
In order to construct a feasibility oracle we will first notice that for a given $\bx$ and $\bu$, determining if $Y(\bx,\bu)=\emptyset$ is equivalent to checking if the optimization problem \eqref{eq:SecondStage_Feas} has a strictly positive  objective function value:
\begin{equation}\label{eq:SecondStage_Feas}
\begin{aligned}
\underline{\alpha}(\bx,\bu)=&\min_{(\alpha,\by)\in\real_+\times\real^m_+} \alpha\\
&\text{s.t.} &\bba\bx+\bbb\by+\bbc\bu+\alpha \bone\geq \bc. 
\end{aligned}
\end{equation} 
Moreover, notice that for any $\bx$ and $\bu\in U$ Problem~\eqref{eq:SecondStage_CSequiv} is feasible. Denoting $\tilde{\by}=(\by,\alpha)$  $\tilde{\bb}=\bone_{m+1}$, $\tilde{\ba}=\bzero$, and $\tilde{\bbb}=[\bbb,\bone]$, Problem~\eqref{eq:SecondStage_CSequiv} can be viewed as the second-stage of a two-stage RO problem of type \eqref{eq:2stage_robust} (where $\by,\ba,\bb,$ and $\bbb$ are replaced by $\tilde{\by}$, $\tilde{\ba}$, $\tilde{\bb}$ and $\tilde{\bbb}$, respectively), which satisfies assumption~\ref{ass:full recourse}.
Thus, applying the optimality oracle suggested in Proposition~\ref{prop:CCG_equiv} to this problem will produce a value 
$\overline{\alpha}(\bx)=\max_{\bu\in U} \underline{\alpha}(\bx,\bu)$ and a maximizer ${\bu}^\alpha(\bx)=\argmax_{\bu\in U} \underline{\alpha}(\bx,\bu)$ such that:
\begin{itemize}
\item $\bx$ is feasible if and only if $\overline{\alpha}(\bx)=0$.
\item ${\bu}^{\alpha}(\bx)$ can always be chosen as an extreme point of $U$ (as a result of Lemma~\ref{lemma:Maximizer_ExtremePoint}).
\end{itemize}
Thus, we can rewrite the definition of $\overline{Z}(\bx)$ and $\overline{\bu}(\bx)$ as follows.
\begin{equation}\label{eq:RealWorstCase}\overline{Z}(\bx)=\begin{cases}
\infty, & \text{if } \overline{\alpha}(\bx)>0,\\
\tilde{Z}(\bx), & \text{otherwise},
\end{cases}\end{equation} 
and 
\begin{equation}\label{eq:RealWorstCase_Realization}\overline{\bu}(\bx)=\begin{cases}
{\bu}^\alpha(\bx), & \text{if } \overline{\alpha}(\bx)>0,\\
\tilde{\bu}(\bx), & \text{otherwise}.
\end{cases}\end{equation} 
Thus, using similar arguments to those in the proof of Proposition~\ref{prop:CCG_converge} we have the following result.
\begin{corollary}
Let \eqref{eq:RealWorstCase} and \eqref{eq:RealWorstCase_Realization} be used to compute $\overline{Z}(\cdot)$ and $\overline{\bu}(\cdot)$, respectively, in Algorithm~\ref{alg:CCG}, and let Assumption~\ref{ass:feasibility} hold. 
If $U$ is a polytope, then $\overline{\bu}(\bx)$ can always be chosen as a vertex of $U$, and Algorithm~\ref{alg:CCG} terminates in a finite number of steps with an optimal solution for Problem \eqref{eq:2stage_robust}.
\end{corollary}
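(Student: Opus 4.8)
The plan is to follow the proof of Proposition~\ref{prop:CCG_converge}, now tracking the case split in \eqref{eq:RealWorstCase}--\eqref{eq:RealWorstCase_Realization}. Two claims have to be verified: that every uncertainty point generated by the algorithm is a vertex of $U$, and that, granted this, the algorithm stops after finitely many iterations at an optimal first-stage decision. For the vertex claim, fix an $\bx$ produced by the algorithm. If $\bx$ is feasible, then $\overline{\alpha}(\bx)=0$ and $\overline{\bu}(\bx)=\tilde{\bu}(\bx)$; by Proposition~\ref{prop:CCG_equiv} the problems \eqref{eq:SecondStage_UpperBound} and \eqref{eq:SecondStage_WorstCaseCost} then have the same optimal solution set, so $\tilde{\bu}(\bx)$ is a maximizer of \eqref{eq:SecondStage_UpperBound}, which by Lemma~\ref{lemma:Maximizer_ExtremePoint} can be chosen to be an extreme point --- hence, as $U$ is a polytope, a vertex --- of $U$. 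If $\bx$ is infeasible, then $\overline{\alpha}(\bx)>0$ and $\overline{\bu}(\bx)=\bu^{\alpha}(\bx)$; here I would use the observation already recorded above that, under the substitution $\tilde{\by}=(\by,\alpha)$, $\tilde{\bb}=\bone_{m+1}$, $\tilde{\ba}=\bzero$, $\tilde{\bbb}=[\bbb,\bone]$, the feasibility problem \eqref{eq:SecondStage_Feas} is the recourse problem of a two-stage instance of the form \eqref{eq:2stage_robust} that satisfies Assumption~\ref{ass:full recourse} and is bounded below by $0$, so Lemma~\ref{lemma:Maximizer_ExtremePoint} applied to that instance gives $\bu^{\alpha}(\bx)=\argmax_{\bu\in U}\underline{\alpha}(\bx,\bu)$ at an extreme point, i.e. a vertex, of $U$. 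Hence every $\bu^k$ with $k\ge 1$ is a vertex of $U$.

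For finite termination I would run Algorithm~\ref{alg:CCG} with $\epsilon=0$ (for $\epsilon>0$ finite termination follows a fortiori). Write $Z^{*}=\min_{\bx\in X}\overline{Z}(\bx)$ for the optimal value of Problem~\eqref{eq:2stage_robust}. By Assumption~\ref{ass:feasibility} there is $\bar{\bx}\in X$ with $Y(\bar{\bx},\bu)\neq\emptyset$ for all $\bu\in U$, so $\underline{Z}(\bar{\bx},V)<\infty$ for every $V\subseteq U$; together with the boundedness argument from the proof of Proposition~\ref{prop:CCG_converge}, this keeps $LB^k=\underline{Z}(\bx^k,V^{k-1})$ finite. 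Since $\bx^k$ minimizes $\underline{Z}(\cdot,V^{k-1})$, it follows that $Y(\bx^k,\bu)\neq\emptyset$ for every $\bu\in V^{k-1}$, so $\underline{Z}(\bx^k,V^{k-1})=\max_{\bu\in V^{k-1}}\underline{Z}(\bx^k,\{\bu\})$; as $V^{k-1}\subseteq U$ this yields $LB^k\le\overline{Z}(\bx^k)=UB^k$, and, as usual, $LB^k\le Z^{*}\le UB^k$ since the lower-bound problem is a relaxation and $\overline{Z}(\bx^k)\ge Z^{*}$.

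Next I would show that a non-terminating iteration always contributes a vertex not already in $V^{k-1}$. If $\bx^k$ is infeasible, then $\bu^k=\bu^{\alpha}(\bx^k)$ satisfies $Y(\bx^k,\bu^k)=\emptyset$ while $Y(\bx^k,\bu)\neq\emptyset$ on $V^{k-1}$, so $\bu^k\notin V^{k-1}$. If $\bx^k$ is feasible and $\bu^k\in V^{k-1}$, then $UB^k=\underline{Z}(\bx^k,\{\bu^k\})\le\max_{\bu\in V^{k-1}}\underline{Z}(\bx^k,\{\bu\})=LB^k$, which combined with $LB^k\le UB^k$ forces $LB^k=UB^k$ and the algorithm halts. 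Thus, as long as the algorithm runs, the vertices $\bu^1,\bu^2,\dots$ are pairwise distinct; since $U$ is a polytope it has finitely many vertices, so the algorithm terminates after no more than $|\mathrm{vert}(U)|+1$ iterations. At termination $LB^k=UB^k$, whence $Z^{*}\le UB^k=\overline{Z}(\bx^k)=LB^k\le Z^{*}$, so $\bx^k$ is optimal for Problem~\eqref{eq:2stage_robust}.

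I expect the main obstacle to be the bookkeeping around infeasible iterates, a situation that cannot arise in the relatively-complete-recourse setting of Proposition~\ref{prop:CCG_converge}. One must argue at once that the complementarity oracle of Proposition~\ref{prop:CCG_equiv}, applied to \eqref{eq:SecondStage_Feas}, returns a vertex (via Lemma~\ref{lemma:Maximizer_ExtremePoint}) and that $\overline{\alpha}(\bx^k)>0$ certifies infeasibility by producing a scenario $\bu^k$ with $Y(\bx^k,\bu^k)=\emptyset$, so that $\bu^k$ is necessarily new; and one must secure the finiteness of $LB^k$, which is precisely where Assumption~\ref{ass:feasibility} (and not merely boundedness) enters and is the only place the argument genuinely departs from that of Proposition~\ref{prop:CCG_converge}.
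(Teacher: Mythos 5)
Your proof is correct and follows essentially the same route the paper intends: the paper gives no separate proof of this corollary, deferring to ``similar arguments'' to Proposition~\ref{prop:CCG_converge}, whose polytope case likewise rests on (i) every generated $\bu^k$ being a vertex (via Lemma~\ref{lemma:Maximizer_ExtremePoint}, applied in your infeasible branch to the slack problem \eqref{eq:SecondStage_Feas}, exactly as the paper does when it asserts $\bu^{\alpha}(\bx)$ is an extreme point) and (ii) finiteness of the vertex set forcing termination, with the stopping criterion then certifying optimality. Your explicit bookkeeping for infeasible iterates --- that $LB^k<\infty$ forces $Y(\bx^k,\bu)\neq\emptyset$ on $V^{k-1}$, so the infeasibility certificate $\bu^{\alpha}(\bx^k)$ is necessarily a new vertex --- is precisely the detail the paper leaves implicit, and it is handled correctly.
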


Although we can use Proposition~\ref{prop:CCG_equiv} to construct an MIO problem, similar to \eqref{eq:SecondStage_WorstCaseMIO}, in order to compute $\overline{\alpha}(\bx)$, it is practically much harder to solve. Identifying the tolerance $\epsilon>0$ for which we can determine that $\overline{\alpha}(\bx)=\epsilon>0$ is nontrivial. Moreover, in numerical experiments we found that while computing $\tilde{Z}(\bx)$ takes seconds, the optimization problem used to compute $\overline{\alpha}(\bx)$, for the same problem instance, does not solve even after 20-30 minutes. Therefore, in the next section, we take a different approach to modifying Algorithm~\ref{alg:CCG}, in order to ensure both feasibility of the resulting first-stage decision and convergence, under Assumption~\ref{ass:feasibility}.

\section{Duality Driven Bender Decomposition (DDBD)}\label{sec:DDBD}
In this section, we will describe a modification of the CCG algorithm which guarantees convergence to an optimal solution of Problem~\eqref{eq:2stage_robust} under Assumption~\ref{ass:feasibility}. Since, in general, finding whether $\bx$ is feasible for Problem~\eqref{eq:2stage_robust} is a hard problem, our method utilizes two types algorithms:
\begin{enumerate}
\item A fast algorithm $\mathcal{F}_1$ that, given $\bx\in X$, returns a point $\bu^{\mathcal{F}}(\bx)$ 
such that $\tilde{Z}(\bx)\leq \underline{Z}(\bx,\{\bu^{\mathcal{F}}(\bx)\})\leq \overline{Z}(\bx)$. It follows from Proposition \ref{prop:CCG_equiv} that if $\bx$ is feasible then $\underline{Z}(\bx,\{\bu^{\mathcal{F}}(\bx)\})= \overline{Z}(\bx)$. We also assume that if $U$ is a polytope, the output of $\mathcal{F}_1(\bx)$ must be a vertex of $U$. We will refer to $\mathcal{F}_1$ as the \emph{fast feasibility oracle}.
\item A slow algorithm $\mathcal{F}_2$ that verifies the feasibility of $\bx$, and in the case $\bx$ is infeasible, returns an uncertainty realization $\overline{\bu}(\bx)$ such that $\overline{Z}(\bx)=\underline{Z}(\bx,\{\overline{\bu}(\bx)\})$. Notice that by the definition of $\overline{\bu}(\bx)$ given in \eqref{eq:RealWorstCase_Realization} and Lemma~\ref{lemma:Maximizer_ExtremePoint} we can assume w.l.o.g. that
$\overline{\bu}(\bx)$ is a vertex of $U$. We will refer to $\mathcal{F}_2$ as the \emph{exact feasibility oracle}.
\end{enumerate}

Given these oracles we propose the following algorithm. 
\begin{algorithm}\caption{Duality Driven Bender Decomposition (DDBD)}\label{alg:DDBD}
		\begin{itemize}
				\item[$\rm{(1)}$] {\bf Input:} $X,\bba,\bbb,\bbc,\ba,\bb,\bc,\bbd,\bd,\bu^0\in U.$
			    \item[$\rm{(2)}$] {\bf Initialize:} ${UB}^0=\infty$, ${LB}^0=-\infty$, $V^0=\{\bu^0\}, k=0$
			    \item[$\rm{(3)}$] While ${LB}^k< {UB}^k$
			    \begin{itemize}
			        \item[$\rm{(a)}$] Update $k\leftarrow k+1$.
			        \item[$\rm{(b)}$] Compute $\bx^k=\argmin_{\bx\in X} \underline{Z}(\bx,V^{k-1})$ and its corresponding lower bound value ${LB}^k=\underline{Z}(\bx^k,V^{k-1})$.
			        \item[$\rm{(c)}$] Compute $\bu^{k}=\mathcal{F}_1(\bx^k)$ and the corresponding upper bound ${{UB}^k=\underline{Z}(\bx^k,\{\bu^k\})}$.
			        \item[$\rm{(d)}$] Update $V^{k}\leftarrow V^{k-1}\bigcup\{\bu^{k}\}$.           
			     \end{itemize}
			     \item[$\rm{(4)}$] Compute $\overline{\bu}(\bx^k)=\mathcal{F}_2(\bx^k)$ and $\overline{Z}(\bx^k)=\underline{Z}(\bx,\{\overline{\bu}(\bx^k)\})$.\\
			     If $\overline{Z}(\bx^k)=\infty$
			     set $V\leftarrow V\bigcup\{\overline{\bu}(\bx^k)\}$ and $UB=\infty$ and go to Step 3.  
			     Otherwise return $\bx^k$ and $\overline{Z}(\bx^k)$.      		
        \end{itemize} 
\end{algorithm}

\begin{theorem}
Let $\mathcal{F}_1$ and $\mathcal{F}_2$ be the fast and exact feasibility oracles in Algorithm~\ref{alg:DDBD}, respectively, and let Assumption~\ref{ass:feasibility} hold. 
If $U$ is a polytope, then Algorithm~\ref{alg:DDBD} terminates in a finite number of steps and return an optimal solution of Problem~\eqref{eq:2stage_robust}. 
\end{theorem}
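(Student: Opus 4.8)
The plan is to establish the two assertions --- that Algorithm~\ref{alg:DDBD} halts after finitely many steps and that the first-stage decision it returns is optimal for Problem~\eqref{eq:2stage_robust} --- essentially separately: the first by a vertex-exhaustion argument, the second by chasing a short chain of inequalities at the last exit of the inner while loop. Fix notation: let $N<\infty$ be the number of vertices of the polytope $U$, let $Z^*=\inf_{\bx\in X}\overline{Z}(\bx)$ be the optimal value of Problem~\eqref{eq:2stage_robust}, and note that for any $\bx$ and any finite $V\subseteq U$ the master value decouples over scenarios, $\underline{Z}(\bx,V)=\max_{\bu\in V}\underline{Z}(\bx,\{\bu\})$; here each scenario value $\underline{Z}(\bx,\{\bu\})$ with $Y(\bx,\bu)\neq\emptyset$ is finite, because the dual feasible set $\{\bw\geq\bzero:\bbb^T\bw\leq\bb\}$ must be nonempty (otherwise, picking a feasible $\bx$ via Assumption~\ref{ass:feasibility}, every scenario value at that $\bx$ would be $-\infty$, so $Z^*=-\infty$, contradicting boundedness of Problem~\eqref{eq:2stage_robust}). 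A preliminary remark I will lean on throughout is that every master value $LB^k=\underline{Z}(\bx^k,V^{k-1})$ computed in Step~3b is finite: $V^{k-1}\subseteq U$ (it is built from $\bu^0\in U$ and vertices returned by $\mathcal{F}_1,\mathcal{F}_2$), so by Assumption~\ref{ass:feasibility} there is $\bx^*\in X$ with $Y(\bx^*,\bu)\neq\emptyset$ for all $\bu\in V^{k-1}$, whence $LB^k\leq\underline{Z}(\bx^*,V^{k-1})<\infty$, while $LB^k>-\infty$ by the decoupling remark.

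For finite termination I would track the growth of $V$. Take an iteration $k$ of the Step~3 loop that is \emph{not} the one at which the loop exits, i.e.\ $LB^k<UB^k$. Were the vertex $\bu^k=\mathcal{F}_1(\bx^k)$ already in $V^{k-1}$, then $UB^k=\underline{Z}(\bx^k,\{\bu^k\})\leq\underline{Z}(\bx^k,V^{k-1})=LB^k$, contradicting $LB^k<UB^k$; hence $\bu^k$ is a vertex of $U$ not previously in $V$. Likewise, whenever Step~4 sends us back to Step~3 we have $\overline{Z}(\bx^k)=\infty$, so $Y(\bx^k,\overline{\bu}(\bx^k))=\emptyset$ and $\underline{Z}(\bx^k,\{\overline{\bu}(\bx^k)\})=\infty$; this excludes $\overline{\bu}(\bx^k)\in V^{k-1}$ (else $LB^k=\infty$) and $\overline{\bu}(\bx^k)=\bu^k$ (else $UB^k=\infty$, forcing $LB^k\geq UB^k=\infty$ since the loop had already exited), so $\overline{\bu}(\bx^k)\notin V^k$ is again a fresh vertex of $U$. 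Since $V$ only grows and $U$ has $N$ vertices, the number of ``non-exiting'' inner iterations plus the number of returns from Step~4 is at most $N$; as each pass through the while loop contains exactly one exiting iteration, there are at most $N+1$ passes, at most $N+1$ total inner iterations, and at most $N+1$ executions of Step~4. In particular the last execution of Step~4 does not trigger a return, so it reaches ``return $\bx^k$,'' and the algorithm halts.

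For optimality, write $\bx^K$ for the first-stage decision at the final exit of the while loop, so $LB^K\geq UB^K$. For every $\bx\in X$,
\begin{equation*}
\tilde{Z}(\bx^K)\ \leq\ \underline{Z}(\bx^K,\{\bu^K\})\ =\ UB^K\ \leq\ LB^K\ =\ \underline{Z}(\bx^K,V^{K-1})\ \leq\ \underline{Z}(\bx,V^{K-1})\ \leq\ \overline{Z}(\bx),
\end{equation*}
where the first inequality is the defining property of $\mathcal{F}_1$, the second is the exit condition of the loop, the fourth is optimality of $\bx^K$ for the master over $X$, and the last is $\underline{Z}(\bx,V^{K-1})=\max_{\bu\in V^{K-1}}\underline{Z}(\bx,\{\bu\})\leq\max_{\bu\in U}\underline{Z}(\bx,\{\bu\})=\overline{Z}(\bx)$. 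Taking the infimum over $\bx\in X$ gives $\tilde{Z}(\bx^K)\leq Z^*$. Since the algorithm halted, Step~4 found $\overline{Z}(\bx^K)<\infty$, i.e.\ $\bx^K$ is feasible for Problem~\eqref{eq:2stage_robust}; Proposition~\ref{prop:CCG_equiv} then gives $\overline{Z}(\bx^K)=\tilde{Z}(\bx^K)\leq Z^*$, while $\overline{Z}(\bx^K)\geq Z^*$ holds trivially because $\bx^K\in X$. Hence $\overline{Z}(\bx^K)=Z^*$ and the returned $\bx^K$ is an optimal solution of Problem~\eqref{eq:2stage_robust}.

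The step I expect to be the main obstacle is not any single inequality but making the bookkeeping of the vertex-exhaustion argument airtight: one must keep the inner while-loop iterations distinct from the outer re-entries produced by Step~4, check that each of these really removes a vertex from the finite reservoir (which is where boundedness of Problem~\eqref{eq:2stage_robust} and Assumption~\ref{ass:feasibility} genuinely enter, through finiteness of the $LB^k$), and invoke the assumed properties of $\mathcal{F}_1$ and $\mathcal{F}_2$ that their outputs are vertices of $U$ (ultimately Lemma~\ref{lemma:Maximizer_ExtremePoint}) so that they lie in that reservoir. Once this is set up, the optimality half is the same sandwich argument used in the CCG analysis and should be routine.
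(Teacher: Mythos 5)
Your proof is correct and follows essentially the approach the paper intends: the paper omits this proof with a pointer to Proposition~\ref{prop:CCG_converge}, whose argument is exactly the vertex-exhaustion count for finite termination plus the sandwich of bounds at the stopping criterion for optimality. You correctly supply the one adaptation the paper leaves implicit --- namely that, because $\mathcal{F}_1$ is inexact, the exit condition $LB\geq UB$ alone does not certify optimality and must be combined with the Step~4 feasibility verification and Proposition~\ref{prop:CCG_equiv} to conclude $\overline{Z}(\bx^K)=\tilde{Z}(\bx^K)\leq Z^*$, and that each Step-4 re-entry also consumes a fresh vertex of $U$.
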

The proof is similar to that of Proposition~\ref{prop:CCG_converge} and will be omitted.
We will now present specific algorithms for $\mathcal{F}_1$ and $\mathcal{F}_2$, which satisfy the restrictions given above for the case $U$ is a polytope.

\subsection{Algorithm for $\mathcal{F}_1$}\label{sec:F1}
Let $\bx\in X$ be some first-stage decision of Problem~\eqref{eq:2stage_robust}. We saw that
$$\tilde{Z}(\bx)=\underline{Z}(\bx,\{\tilde{\bu}(\bx)\})\leq \overline{Z}(\bx),$$ 
where the inequality is satisfied with equality if and only if $\bx$ is feasible ($\overline{Z}(\bx)<\infty$).

If Assumption~\ref{ass:feasibility} holds and $U$ is compact, then for any $\bx\in X$ and $\bu\in U$ the second-stage problem is feasible if and only if $Y(\bx,\bu)\neq \emptyset$, which is true if and only if the following Problem~\eqref{eq:dual_inner} is unbounded \footnote{Notice that under Assumption~\ref{ass:feasibility} Problem~\eqref{eq:dual_inner} can not be infeasible.}. 
\begin{equation}\label{eq:dual_inner}
\max_{\bw:\bbb^T\bw\leq \bb, \bw\geq 0} (\bc-\bba\bx-\bbc \bu)^T\bw.
\end{equation}
This is equivalent to finding a vector in the recession cone of the feasible set for which the objective is strictly positive, \ie checking if the following optimization Problem~\eqref{eq:unbounded_dual_inner} has a strictly positive optimal objective function value.
\begin{equation}\label{eq:unbounded_dual_inner}
\max_{\bw:\bbb^T\bw\leq \bzero, \bzero\leq\bw\leq \bone} (\bc-\bba\bx-\bbc \bu)^T\bw.
\end{equation}
Therefore, if $\bx$ is infeasible, maximizing Problem~\eqref{eq:unbounded_dual_inner} over $\bu$ will result in the bilinear optimization Problem~\eqref{eq:dual_inner_max_u}, which has a strictly positive optimal objective function value:
\begin{equation}\label{eq:dual_inner_max_u}
\max_{\bu\in U,\bw:\bbb^T\bw\leq \bzero, \bzero\leq\bw\leq \bone} (\bc-\bba\bx-\bbc \bu)^T\bw.
\end{equation}
Since Problem~\eqref{eq:dual_inner_max_u} is not convex, we must apply some heuristic to solve it. Moreover, since the problem is bilinear, an easy solution is applying the alternating maximization (AM) method as described in Algorithm \ref{alg:Alternating_Max}. 

\begin{algorithm}[H]\caption{Alternating Maximization (AM)}\label{alg:Alternating_Max}
		\begin{itemize}
       		\item[$\rm{(1)}$] {\bf Input:} $\bba,\bbb,\bbc,\bc,\bbd,\bd, \tilde{\bu}(\bx)$
       		\item[$\rm{(3)}$]{\bf Initialize:} $f^{0}=-\infty$, $\bw^*=\argmax_{\bw:\bbb^T\bw\leq \bb, \bw\geq \bzero}(\bc-\bba\bx-\bbc\tilde{\bu}(\bx))^T\bw$, 
            \item[$\rm{(3)}$] While $(f^k-f^{k-1})> 0$ or $k=0$
            \begin{itemize}
            \item[$\rm{(a)}$] Update $k\leftarrow k+1$.
            \item[$\rm{(b)}$] Find $\bu^*\in\argmax_{\bu\in U} (\bc-\bba\bx-\bbc\bu)^T\bw^*$.
            \item[$\rm{(c)}$] Find $\bw^*\in\argmax_{\bw:\bbb^T\bw\leq \bzero, \bzero\leq \bw\leq \bone} (\bc-\bba\bx-\bbc\bu^*)^T\bw$. 
            \item[$\rm{(d)}$] Compute $f^k=(\bc-\bba\bx-\bbc\bu^*)^T\bw^*$.            
            \end{itemize}     
        \item[$\rm{(4)}$] Return  $\bu^{\mathcal{F}}(\bx)=\bu^*$.     
        \end{itemize} 
\end{algorithm}

Starting Algorithm~\ref{alg:Alternating_Max} from point $\tilde{\bu}(\bx)$, which generates the maximal objective function value for Problem~\eqref{eq:SecondStage_WorstCaseMIO}, we alternate between solving the problem for $\bw$ given $\bu$ and solving the problem for $\bu$ given $\bw$. We stop when we cannot improve the objective function further.
We know, by the fact that the AM method is monotone and that the objective function is continuous (specifically bilinear), that if $U$ is convex and compact, then the value will converge to some limit in sublinear time (see for example \citep[Theorem 10]{shtern2016computational}), and that all
limit points of the algorithm are stationary points of the problem \citep[Corollary 2]{Grippo2000}. However, there is no guarantee that the AM method will converge to the true optimal value. Therefore, if this algorithm terminates with a nonpositive value it does not mean that $\bx$ is feasible, but rather that we can not prove infeasibility, and so this algorithm, which will take the role of $\mathcal{F}_1(\bx)$, is a fast but inexact way of determining the feasibility of $\bx$. 

Notice that Algorithm~\ref{alg:Alternating_Max} is well defined for any compact $U$. Since $\tilde{\bu}(\bx)$ is feasible for Problem~\eqref{eq:SecondStage_WorstCaseMIO}, the maximizer $\bw^*$ exists, and $\bu^*$ exists due to the compactness of $U$. Next we prove that Algorithm~\ref{alg:Alternating_Max} satisfies the assumptions we made about $\mathcal{F}_1$.

\begin{proposition}
Let $\bu^{\mathcal{F}}(\bx)$ be the output of Algorithm~\ref{alg:Alternating_Max}, then
$$\underline{Z}(\bx,\{\bu^{\mathcal{F}}(\bx)\})\geq \underline{Z}(\bx,\{\tilde{\bu}(\bx)\})\equiv\tilde{Z}(\bx).$$
If $U$ is a polytope then $\bu^{\mathcal{F}}(\bx)$ can always be chosen as a vertex of $U$.
\end{proposition}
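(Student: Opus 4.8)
The plan is to recast the inner second-stage value through linear programming duality and then track how the alternating maximization moves it. Throughout write $g_{\bu}(\bw):=(\bc-\bba\bx-\bbc\bu)^T\bw$, $D:=\{\bw\geq\bzero:\bbb^T\bw\leq\bb\}$, and $R:=\{\bw:\bbb^T\bw\leq\bzero,\ \bzero\leq\bw\leq\bone\}$, so that Step~3c of Algorithm~\ref{alg:Alternating_Max} maximizes $g_{\bu}(\cdot)$ over $R$ while the initialization maximizes $g_{\bu}(\cdot)$ over $D$. Since Problem~\eqref{eq:2stage_robust} is bounded, the LP $\min_{\by\in Y(\bx,\bu)}\bb^T\by$ is never unbounded below and its dual is always feasible ($\bzero\in D$), so linear duality gives, for every $\bu\in U$,
$$\underline{Z}(\bx,\{\bu\})=\ba^T\bx+\max_{\bw\in D}g_{\bu}(\bw),$$
which is finite when $Y(\bx,\bu)\neq\emptyset$ and equals $+\infty$ otherwise; moreover $Y(\bx,\bu)=\emptyset$ if and only if $\max_{\bw\in R}g_{\bu}(\bw)>0$ (the dual-unboundedness criterion, normalized to the truncated cone $R$; note $\max_{\bw\in R}g_{\bu}(\bw)\geq g_{\bu}(\bzero)=0$ always). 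Attainment of all these maxima over $U$, $D$, $R$ follows, as in Lemma~\ref{lemma:Maximizer_ExtremePoint}, from compactness of $U$ and $R$ and feasibility of the relevant LPs.

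The second assertion is then immediate from this description: $\bu^{\mathcal{F}}(\bx)$ is the point returned by Step~3b, \ie a maximizer over $U$ of the linear function $\bu\mapsto(\bc-\bba\bx-\bbc\bu)^T\bw^{*}$ for the current $\bw^{*}$, so when $U$ is a polytope it can always be chosen a vertex of $U$, and choosing a vertex at each execution of Step~3b keeps every iterate $\bu^{k}$ a vertex.

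For the inequality I would show $\{\underline{Z}(\bx,\{\bu^{k}\})\}_{k\geq0}$ is non-decreasing along the iterates (with $\bu^{0}:=\tilde\bu(\bx)$); this suffices, since $\underline{Z}(\bx,\{\bu^{0}\})=\tilde Z(\bx)$ and $\bu^{\mathcal{F}}(\bx)$ is one of the $\bu^{k}$. The base step $k=0$ is the crux. As $\tilde\bu(\bx)$ is feasible for \eqref{eq:SecondStage_WorstCaseMIO} it admits a feasible second stage, so $Y(\bx,\tilde\bu(\bx))\neq\emptyset$ and the initialization is dual-optimal: $g_{\bu^{0}}(\bw^{*})=\max_{\bw\in D}g_{\bu^{0}}(\bw)=\tilde Z(\bx)-\ba^T\bx$, with $\bw^{*}\in D$. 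Step~3b of the first pass returns $\bu^{1}\in\argmax_{\bu\in U}g_{\bu}(\bw^{*})$, so $g_{\bu^{1}}(\bw^{*})\geq g_{\bu^{0}}(\bw^{*})$, whence
$$\underline{Z}(\bx,\{\bu^{1}\})=\ba^T\bx+\max_{\bw\in D}g_{\bu^{1}}(\bw)\ \geq\ \ba^T\bx+g_{\bu^{1}}(\bw^{*})\ \geq\ \ba^T\bx+g_{\bu^{0}}(\bw^{*})=\tilde Z(\bx),$$
using $\bw^{*}\in D$ in the first inequality. For $k\to k+1$ with $k\geq1$, Step~3b gives $g_{\bu^{k+1}}(\bw^{k})\geq g_{\bu^{k}}(\bw^{k})=f^{k}\geq0$ with $\bw^{k}\in R$: if this is strictly positive then $\max_{\bw\in R}g_{\bu^{k+1}}(\bw)>0$, so $\underline{Z}(\bx,\{\bu^{k+1}\})=+\infty$ and monotonicity is trivial, and if it is $0$ then $\bu^{k}$ already attains $\max_{\bu\in U}g_{\bu}(\bw^{k})$, so the monotone update keeps the incumbent $\bu^{k+1}=\bu^{k}$ and $\underline{Z}(\bx,\{\cdot\})$ is unchanged. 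Chaining these inequalities gives $\underline{Z}(\bx,\{\bu^{\mathcal{F}}(\bx)\})\geq\underline{Z}(\bx,\{\bu^{0}\})=\tilde Z(\bx)$.

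The delicate point, which I expect to be the main obstacle, is exactly this ``unchanged'' case. Step~3c can return a recession direction $\bw^{k}$ on which $g_{\bu^{k}}$ vanishes (\eg $\bw^{k}=\bzero$, which is forced when $R=\{\bzero\}$), after which the $\argmax$ over $U$ in Step~3b is degenerate; unless one reads Step~3b as a \emph{monotone} block-maximization step that retains the current $\bu^{*}$ whenever it is already optimal (which is what ``we stop when we cannot improve'' amounts to), the method could drift to an inferior $\bu$, and a small one-dimensional instance shows the conclusion of the proposition genuinely fails without this convention. With that convention in place, and with careful bookkeeping of the $+\infty$-versus-finite cases through the recession-cone criterion, the proof reduces to the short chain of inequalities above.
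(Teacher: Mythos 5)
Your argument follows the same route as the paper's proof: LP duality to write $\underline{Z}(\bx,\{\bu\})=\ba^T\bx+\max_{\bw\in D}g_{\bu}(\bw)$, the Farkas/recession--cone criterion over $R$ to detect $Y(\bx,\bu)=\emptyset$, the initialization $\bw^*\in D$ at $\tilde{\bu}(\bx)$ to obtain the base inequality, and linearity of Step~3b over the polytope $U$ for the vertex claim. The paper's proof consists of exactly your two cases --- if the final $f^k>0$ the returned $\bu^*$ certifies second-stage infeasibility so $\underline{Z}(\bx,\{\bu^*\})=\infty$, and if $f^k=0$ it appeals to ``the initialization of $\bw^*$'' --- compressed into two sentences with no induction over iterations. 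Your further observation is genuine and worth recording: the initialization argument only bounds $\underline{Z}$ at the \emph{first} iterate $\bu^{*,1}$, whereas the loop as written always executes at least two passes (because $f^1-f^0=+\infty$), and when $f^1=0$ the second pass maximizes $g_{\bu}(\bw^{*,1})$ for a $\bw^{*,1}\in R$ whose maximum over $U$ is $0$ (possibly $\bw^{*,1}=\bzero$), so the $\argmax$ in Step~3b is degenerate and an adversarial selection can return a $\bu$ that is not a worst-case realization; since $\tilde{Z}(\bx)$ equals the maximum of $\underline{Z}(\bx,\{\bu\})$ over the $\bu$ with nonempty $Y(\bx,\bu)$, such a $\bu$ violates the claimed inequality whenever $\bx$ is feasible. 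Your monotone tie-breaking convention for Step~3b (keep the incumbent when it already attains the maximum) repairs this, as would returning $\bu^{*,1}$ whenever no strictly positive $f^k$ is encountered; either convention should be stated explicitly. One small blemish: the parenthetical ``$\bzero\in D$'' as the reason the dual is feasible requires $\bb\geq\bzero$, which is not assumed; feasibility of $D$ instead follows from the second-stage LP at $\tilde{\bu}(\bx)$ being feasible and bounded (the paper's footnote to Problem~\eqref{eq:dual_inner}), and nothing downstream is affected.
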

\begin{proof}{Proof.}
The first claim is trivial, since if $f^k>0$ then by construction $\underline{Z}(\bx,\{\bu^*\}))=\infty$, otherwise, if $f^k=0$, by the initialization of
$\bw^*$ we have that $\underline{Z}(\bx,\{\bu^*\}))\geq \underline{Z}(\bx,\{\tilde{\bu}(\bx)\}$.
If $U$ is a polytope then $\bu^*$ is a solution of a linear optimization problem, and thus can always be chosen to be a vertex of $U$. 
\end{proof}
 
\subsection{Algorithm for $\mathcal{F}_2$}\label{sec:F2}
In order to prove or disprove infeasibility in the case where $U$ is a polytope defined by~\eqref{eq:Define_U}, we suggest looking at the dual formulation of Problem \eqref{eq:2stage_robust} in $(\bu,\by)$ suggested in \citep{Bertsimas2015}. 
\begin{equation}\label{Problem:Two Stage Dual}
\begin{aligned}[t]
&\min_{\bx\in X} &\ba^T\bx+\max_{\bw\in W}\min_{\blambda\geq 0:\bbc^T\bw+\bbd^T\blambda\geq 0} (\bc-\bba\bx)^T\bw &+\bd^T\blambda,\\
\end{aligned}
\end{equation}
where $W=\paren{\bw\in\real^r: \bw\geq0,\bbb^T\bw\leq \bb}$.
Notice that in this problem, $\bx$ does not impact the feasibility of a certain solution, only the value of the objective function. Moreover, for each fixed $\bw\in W$ the optimal value of $\blambda$ is simply given as a solution of the following linear optimization problem (independent of $x$)
\begin{equation}\label{Problem:Lambda given w}
\begin{aligned}[t]
&\min_{\blambda\in \real^d} &\bd^T\blambda&\\
&\quad\text{s.t.}&\bbc^T\bw+\bbd^T\blambda&\geq \bzero,\\
&&\blambda&\geq \bzero.\\
\end{aligned}
\end{equation}
Therefore, we can define an extended $\tilde{\blambda}$ variable which includes both the original $\blambda$ as well as the slack variables, and allows us to transform the problem to the following standard form.
\begin{equation}\label{Problem:Problem in lambda}
\begin{aligned}[t]
&\min_{\blambda\in \real^d} &\tilde{\bd}^T\tilde{\blambda}&\\
&\quad\text{s.t.}&\bbc^T\bw+\tilde{\bbd}^T\tilde{\blambda}&= \bzero,\\
&&\tilde{\blambda}&\geq \bzero.\\
\end{aligned}
\end{equation}
where $\tilde{\bd}=[\bd;\bzero]$ and $\tilde{\bbd}=[\bbd, -\bbi]$. Assuming that $U$ is nonempty, Problem \eqref{Problem:Problem in lambda} must also be feasible, and so there exists a set of independent columns, the index set of which we denote by $I$, such that
$\tilde{\blambda}_I^*=-\tilde{\bbd}_I^{-1}\bbc^T\bw$, and this basis is optimal for all $\bw$ which belong to 
$$W_I=\paren{\bw\in W:-\tilde{\bbd}_I^{-1}\bbc^T\bw\geq \bzero}.$$ Thus, it follows that the optimal $\blambda(\bw)$ is a piecewise linear function of $\bw$, and independent of $\bx$. Identifying all the optimal bases and regions $W_I$, we can subsequently find the optimal two-stage strategy for this dual problem. However, since the number of these bases might be exponential, identifying only the bases which generate the worst case is important.

In order to utilize these facts and construct an algorithm we use the partitioning concept suggested in \citep{BertsimasDunning2014,PostekDenHertog2014}. Assuming a given partition of $W$ to $\paren{W^p}_{p\in\mathcal{L}}$ such that $$W^p=\paren{\bw\in\real^r: \bw\geq \bzero,\; \bbb^p\bw\leq\bb^p},\; W=\bigcup\limits_{p\in\mathcal{L}} W^p,$$ we assume a second-stage policy $\lambda^p(\bw)$ which is linear in $\bw$, \ie
$$\lambda^p(\bw)=\bbz^p\bw+\bz^p.$$
Thus, applying this linear rule to each partition element $p\in\mathcal{L}$ results in Problem  \eqref{Prob:LinearDecision_Partition_Dual}, and taking $\tau^*=\max\limits_{p\in\mathcal{L}} \tau^p$, we obtain an upper bound on Problem \eqref{Problem:Two Stage Dual}, where $\tau^p$ is defined as
\begin{equation}\label{Prob:LinearDecision_Partition_Dual}
\begin{aligned}[t]
&\min_{\tau^p,\bbz^p} &\tau^p&\\
&\text{s.t.}\quad &(\bc-\bba\bx)^T\bw +\bd^T\bbz^p\bw+\bd^T\bz^p+\ba^T\bx&\leq \tau^p,\quad\forall \bw\in W^p\\\
&&\bbc^T\bw+\bbd^T\bbz^p\bw+\bbd^T\bz^p&\geq \bzero,\quad\forall \bw\in W^p\\
&&\bbz^p\bw+\bz^p&\geq \bzero,\quad\forall \bw\in W^p.\\
\end{aligned}
\end{equation}
Problem \eqref{Prob:LinearDecision_Partition_Dual} can then be reformulated as a linear optimization problem using the robust counterpart mechanism described in \citep{BenTal2009}.
Notice that for a given $\bx$, $\tau^*$ admits an upper bound on the value of Problem~\eqref{eq:2stage_robust}, regardless of the partition $\paren{W^p}_{p\in\mathcal{L}}$. Thus, if at any stage of the partitioning scheme \eqref{Prob:LinearDecision_Partition_Dual} admits a finite value for all $p\in\mathcal{L}$ it follows that the given $\bx$ is feasible. Verifying infeasibility, however, is a more complex task. 

We suggest a partitioning scheme, different from the ones presented by \citet{BertsimasDunning2014} and \citet{PostekDenHertog2014}, which is based on identifying optimal bases. The full partitioning algorithm is given in Algorithm~\ref{alg:Dcuts}. In each stage of the algorithm we identify the active partition elements set $\mathcal{A}$ defined as
$$\mathcal{A}=\paren{p\in \mathcal{L}: \tau^p=\tau^*}.$$ In the case where $\bx$ is not verified to be feasible, the upper bound  $\tau^*$ will be infinity (unbounded problem). For each $p\in \mathcal{A}$ we construct its sub-partition by first finding a point $\bw\in\inter{(W^p)}$ and its corresponding optimal basis $I^p$, and define $\tilde{\bbb}=-\tilde{\bbd}_{I^p}^{-1}\bbc^T$. We then define its primary sub-partition with the aid of matrix $\tilde{\bbb}$. Without loss of generality, we assume that matrix $\tilde{\bbb}\in\real^{k\times l}$ has $k\leq r$ rows which are nonzeros (since all zero rows can be eliminated), and we partition $W^p$ into $k+1$ partition elements, a primary partition element with partitioning index $\ell=0$, given by
\begin{equation}\label{eq:primary_part} \tilde{W}^{0}=W\cap \paren{\bw: \tilde{\bbb}\bw\geq \bzero},\end{equation}
and $k$ secondary partition elements, such that for $j=1,\ldots, k$ the partition with partitioning index $\ell=j$ is defined by 
\begin{equation}\label{eq:secondary_part} \tilde{W}^j=W^p \cap \paren{\bw: \tilde{\bbb}_i\bw\geq 0, i=1,\ldots,j-1,\;  \tilde{\bbb}_j\bw\leq 0 }.\end{equation}

The following proposition states that if at any time in the algorithm a partition element that cannot be  partitioned further has an unbounded objective value, then $\bx$ is infeasible for Problem~\eqref{eq:2stage_robust}.
\begin{proposition}
If at some stage of Algorithm~\ref{alg:Dcuts}, for which $\tau^*=\infty$, there exists a partition $p\in\mathcal{A}$ with partition index $\ell^p=0$, then $\bx$ is infeasible for Problem~\eqref{eq:2stage_robust}.
\end{proposition}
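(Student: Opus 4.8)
The plan is to exploit that a primary partition element is exactly a region on which the optimal dual multiplier of the inner problem is a single linear function of $\bw$, so that on it the linear-decision-rule problem~\eqref{Prob:LinearDecision_Partition_Dual} is \emph{exact} and unboundedness there is genuine, not an artifact of the restriction. Concretely, $\ell^p=0$ means $W^p$ was produced as the primary sub-partition $\tilde{W}^0$ of \eqref{eq:primary_part} for some ancestor element, from an optimal basis $I$ of~\eqref{Problem:Problem in lambda} at an interior point of that ancestor, with $\tilde{\bbb}=-\tilde{\bbd}_I^{-1}\bbc^T$; hence, by \eqref{eq:primary_part}, $W^p\subseteq\{\bw:\tilde{\bbb}\bw\geq\bzero\}=W_I$, the region on which basis $I$ is optimal for~\eqref{Problem:Problem in lambda}. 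On $W^p$ the map $\tilde{\blambda}^*(\bw)$ with basic part $\tilde{\blambda}_I^*(\bw)=-\tilde{\bbd}_I^{-1}\bbc^T\bw$ and vanishing non-basic part is therefore linear in $\bw$ and \emph{optimal} for~\eqref{Problem:Problem in lambda}, hence for~\eqref{Problem:Lambda given w}. Writing $\blambda^*(\bw)$ for its $\blambda$-components, we obtain a linear rule $\blambda^*(\bw)=\bbz^*\bw$ with zero constant term satisfying $\blambda^*(\bw)\geq\bzero$ on $W_I$, $\bbc^T\bw+\bbd^T\blambda^*(\bw)\geq\bzero$ on $W_I$ (the slacks of the standard-form equality of~\eqref{Problem:Problem in lambda}), and $\bd^T\blambda^*(\bw)=\tilde{\bd}^T\tilde{\blambda}^*(\bw)=g(\bw)$ for $\bw\in W_I$, where $g(\bw):=\min\{\bd^T\blambda:\blambda\geq\bzero,\,\bbc^T\bw+\bbd^T\blambda\geq\bzero\}$ is the optimal value of~\eqref{Problem:Lambda given w}.

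Next I would substitute $\lambda^p(\bw)=\bbz^*\bw$ (with $\bz^p=\bzero$) into~\eqref{Prob:LinearDecision_Partition_Dual} for this $p$. By the previous paragraph its second and third constraints hold on $W^p\subseteq W_I$, so the constraint set of~\eqref{Prob:LinearDecision_Partition_Dual} in $(\bbz^p,\bz^p)$ is nonempty. Since $p\in\mathcal{A}$ and $\tau^*=\infty$, we have $\tau^p=\infty$; but after the robust-counterpart reformulation~\eqref{Prob:LinearDecision_Partition_Dual} is a finite linear program, so $\tau^p=\infty$ means it has \emph{no} feasible point, and since its feasibility constraints are satisfiable this can only happen if every admissible rule --- in particular $\bbz^*\bw$ --- makes $\sup_{\bw\in W^p}[(\bc-\bba\bx)^T\bw+\bd^T\lambda^p(\bw)+\ba^T\bx]=\infty$. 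Using $\bd^T\blambda^*(\bw)=g(\bw)$ on $W^p$ and $W^p\subseteq W$ yields $\max_{\bw\in W}\min\{(\bc-\bba\bx)^T\bw+\bd^T\blambda:\blambda\geq\bzero,\,\bbc^T\bw+\bbd^T\blambda\geq\bzero\}=\infty$.

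Finally I would invoke the two-stage dual of~\citep{Bertsimas2015} underlying~\eqref{Problem:Two Stage Dual}: dualizing the inner $\min_{\by}$ over $Y(\bx,\bu)$ and then the $\max_{\bu\in U}$ gives, for each fixed $\bx$, $\overline{Z}(\bx)=\ba^T\bx+\max_{\bw\in W}\min\{(\bc-\bba\bx)^T\bw+\bd^T\blambda:\blambda\geq\bzero,\,\bbc^T\bw+\bbd^T\blambda\geq\bzero\}$, which is $+\infty$ exactly when $Y(\bx,\bu)=\emptyset$ for some $\bu\in U$; combined with the previous step, $\overline{Z}(\bx)=\infty$, i.e. $\bx$ is infeasible for Problem~\eqref{eq:2stage_robust}. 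I expect the main obstacle to be the middle step: being careful that ``$\tau^p=\infty$'' is an \emph{infeasibility} of the robust counterpart of~\eqref{Prob:LinearDecision_Partition_Dual} rather than a minimization unbounded below, and then leveraging the provably optimal linear rule $\blambda^*(\cdot)$ --- available precisely because $W^p$ lies in the single-basis optimality region $W_I$ --- to convert that infeasibility into honest unboundedness of the true inner max--min over $W$. The standard-form bookkeeping for~\eqref{Problem:Problem in lambda}--\eqref{Problem:Lambda given w} and the cited dual identity are routine.
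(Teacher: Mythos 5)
Your proposal is correct and follows essentially the same route as the paper's own (much terser) proof: on a primary partition element the basis $I^p$ is optimal for every $\bw\in W^p$, so the linear rule is exact there, unboundedness of \eqref{Prob:LinearDecision_Partition_Dual} on that element is genuine unboundedness of the dual problem \eqref{Problem:Two Stage Dual}, and hence $\bx$ is infeasible for Problem \eqref{eq:2stage_robust}. Your version merely makes explicit two points the paper leaves implicit --- that $\tau^p=\infty$ is an infeasibility of the finite robust counterpart rather than an unbounded minimization, and the invocation of the \citet{Bertsimas2015} duality identity at the end.
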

\begin{proof}{Proof.}
Since for any $p\in \{\tilde{p}\in \mathcal{A}:\ell^{\tilde{p}}=0\}$ the optimal solution for any $\bw\in W^p$ is given by the basis $I^p$, \ie $\blambda(\bw)=-\tilde{\bbd}_{I^p}^{-1}\bbc^T\bw$. Since the optimal value on partition $p$ is unbounded, it follows that the optimal value of Problem~\eqref{Problem:Problem in lambda} is unbounded. This implies that $\bx$ is infeasible for Problem \eqref{Problem:Two Stage Dual} and hence to Problem~\eqref{eq:2stage_robust}. 
\end{proof}
\begin{algorithm}[h]\caption{Dual Basis Cuts (DBC)}\label{alg:Dcuts}
		\begin{itemize}
       		\item[$\rm{(1)}$] {\bf Input:} $\bba,\bbb,\bbc,\ba,\bb,\bc,\bbd,\bd, \epsilon>0,\bx,\tilde{\bu}(\bx).$
            \item[$\rm{(2)}$] {\bf Initialize:} $P=1$ $\bbb^1=\bbb^T$, $\bb^1=\bb$, $l^1=-1$, $\text{Children}^1=\emptyset$,$it=1$.  
            \item[$\rm{(3)}$] For $it=1,2,\ldots$ 
            \begin{itemize}           
            \item[$\rm{(a)}$] Let $\mathcal{L}=\paren{p\in\{1,\ldots,P\}:\text{Children}^p=\emptyset}$ be the set of leafs.
            \item[$\rm{(b)}$] For each $p\in\mathcal{L}$ Solve \eqref{Prob:LinearDecision_Partition_Dual} and obtain $(\tau^p,\bbz^p,\bz^p)$. 
            \item[$\rm{(c)}$] Calculate $\tau^*\leftarrow\max_{p\in\mathcal{L}}\tau^p$ 
            \item[$\rm{(d)}$] If $\tau^*<\infty$ {\bf exit} and return $\overline{\bu}(\bx)=\tilde{\bu}(\bx)$.
            \item[$\rm{(e)}$] Else for any $p\in\mathcal{A}$ (\ie $\tau^p=\tau^*$)
            \begin{itemize}
             \item[$\rm{(i)}$] If $l^p=0$ then, find $\tilde{\bw}$ an unbounded ray in the partition (see \eqref{eq:FindingUnboundedRay_Partition})  and return $\overline{\bu}(\bx)\in\argmax_{\bu\in U} (\bc-\bba\bx-\bbc\bu)^T\tilde{\bw}$.
             \item[$\rm{(ii)}$] Else $l^p>0$ then            
             \begin{itemize}
              \item Find $\bw^p\in \inter(W^p)$             
              \item Find $\blambda^p=\arg\min_{\blambda\geq 0:\bbc^T\bw^p+\bbd^T\blambda\geq0} \bd^T\blambda$
              \item Identify basis $I^p$ associated with solution $\blambda^p$ and corresponding $\tilde{\bbd}_{I^p}$. 
              \item Set $\tilde{\bbb}=\tilde{\bbd}_{I^p}^{-1}\bbc^T$ (while eliminating rows $j$ for which $\tilde{\bbb}_j=\bzero$). Let $k$ be the number of rows of $\tilde{\bbb}$ ($k\leq d$ where $d$ are the number of rows in $\bbd$).   
              \item For $j=0,\ldots,k$:\\ 
              If $j=0$ then $P(j)=1$ otherwise $P(j)=p$\\
              If  $\inter(\tilde{W}^j)\neq \emptyset$ (defined in \eqref{eq:primary_part}-\eqref{eq:secondary_part}) then $P\leftarrow P+1$, $\ell^P\leftarrow j$, $W^{P}\leftarrow\tilde{W}^j$, 
               and update $\text{Children}^P\leftarrow\emptyset$ and $\text{Children}^{P(j)}\leftarrow\text{Children}^p\bigcup\{P\}$. 
              \end{itemize}        
            \end{itemize}
             \end{itemize}
        \end{itemize} 
\end{algorithm}

Now that we know how to identify infeasibility, we need to find a way to extract an unbounded ray for the case where a partition with $\ell^p=0$ is unbounded. For that purpose we define the optimal $\bbz^p=-\bbi^p\tilde{\bbd}_{I^p}^{-1}\bbc^T$ and $\bz^p=\bzero$, where $\bbi^p$ is a transformation from the  basis $I^p$ to the original variables. Since $\lambda(\bw)=\bbz^p\bw$ is an optimal solution for this partition we are only left to find a ray for which this solution is unbounded. To do this, we solve the following optimization problem
\begin{equation}\label{eq:FindingUnboundedRay_Partition}
\tilde{\bw}\in\argmax_{\bw:\bbb^p\bw\leq 0,0\leq\bw\leq 1} (\bc-\bba\bx+(\bbz^p)^T\bd)^T\bw,
\end{equation}
which is guaranteed to be positive (since the problem is unbounded on this partition).
As the algorithm suggests, we can now take $\tilde{\bw}$ and find 
$$\overline{\bu}(\bx)\in\argmax_{\bu\in U} (\bc-\bba\bx-\bbc\bu)^T\tilde{\bw}.$$ 
Notice that if $U$ is polyhedral we can always choose $\overline{\bu}(\bx)$ to be a vertex of $U$.
\begin{remark}\label{rem:Dcuts}\leavevmode
\begin{enumerate}[label=({\roman*})]
\item The calculation of the upper bound done in Step 3b of Algorithm~\ref{alg:Dcuts} can be done in parallel for each partition element, and thus both the computational and storage overhead required by Algorithm~\ref{alg:Dcuts} can be reduced.
\item For partition elements $p\in \mathcal{L}$ with partition index $\ell^p=0$ we have that $\bbz^p=-\bbi^p\tilde{\bbd}_{I^p}^{-1}\bbc^T$ and $\bz^p=\bzero$ independent of $\bx$, and thus these values can be computed only once.
\item Algorithm~\ref{alg:Dcuts} with slight modifications can also be used to find the optimal $\bx$ and not only to verify if $\bx$ is optimal. Much like the partitioning algorithms presented in \citep{BertsimasDunning2014,PostekDenHertog2014}, a feasible $\bx$ and an upper bound on its worst case value can be found by solving the following semi-infinite problem
\begin{equation}\label{Prob:LinearDecision_Partition_Dual_x}
\begin{aligned}[t]
&\min\limits_{\substack{\bx,\tau^*,\\\{(\tau^p,\bbz^p,\bz^p)\}_{p\in\mathcal{L}}}} \tau^* \\
&\begin{array}{lrll}
\qquad\text{s.t.}\quad &\tau^*&\geq \tau^p,&\forall p\in\mathcal{L}\\
&(\bc-\bba\bx)^T\bw +\bd^T\bbz^p\bw+\bd^T\bz^p+\ba^T\bx&\leq \tau^p,&\forall \bw\in W^p,\;\forall p\in\mathcal{L},\\
&\bbc^T\bw+\bbd^T\bbz^p\bw+\bbd^T\bz^p&\geq0,&\forall \bw\in W^p,\;\forall p\in\mathcal{L},\\
&\bbz^p\bw+\bz^p&\geq 0,&\forall \bw\in W^p,\;\forall p\in\mathcal{L},
\end{array}
\end{aligned}
\end{equation}
which can be reformulated as an MIO problem.
We can then replace Step 3b by solving Problem~\eqref{Prob:LinearDecision_Partition_Dual_x}, and obtain a lower bound  by solving Problem~\eqref{Prob:LinearDecision_Partition_Dual_x} with $\mathcal{L}$ replaced by 
$\mathcal{L}^0=\{p\in\mathcal{L}:\ell^p=0\}$, \ie the partition elements with partition index $0$, since for these elements the optimal second-stage strategy has already been found. The termination condition in Step 3d will be replaced by $(UB-LB)<\epsilon$ for some tolerance $\epsilon$. Moreover, Step 3e(i) is dropped and in Step 3e(ii) $\bw$ is chosen to be $$\bw^p\in\argmax_{\bw\in W^p}(\bc-\bba\bx)^T\bw +\bd^T\bbz^p\bw+\bd^T\bz^p+\ba^T\bx,$$ provided that the resulting primary partition element has a nonempty interior, and some $\bw^p\in\inter(W^p)$ otherwise. We refer to this method as the \emph{Dual Basis Cuts} (DBC) method which we will also compare with in the numerical section. 
\end{enumerate}
\end{remark}

The next theorem addresses the finite termination of Algorithm~\ref{alg:Dcuts}.
\begin{theorem}
Algorithm~\ref{alg:Dcuts} will terminate after a finite number of iterations.
\end{theorem}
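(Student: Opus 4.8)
The plan is to prove that the partition tree that Algorithm~\ref{alg:Dcuts} maintains over $W$ stays finite, and that every pass through the main loop that does not terminate strictly enlarges it; finiteness of the number of iterations follows at once. First I would fix bookkeeping: call a node $p$ \emph{primary} if $\ell^p=0$ and \emph{secondary} otherwise (the root, with $\ell^1=-1$, being secondary). Reading off Step~3e: a node is refined at most once, a primary node is never refined, and refining a secondary node $p$ creates at most $k\le d$ new secondary children of $p$ (the full-dimensional ones among $\tilde W^{1},\dots,\tilde W^{k}$ in \eqref{eq:secondary_part}) together with at most one primary leaf $\tilde W^{0}$ recorded under the root. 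Letting $\mathcal{T}$ be the subtree consisting of the root and all secondary nodes, this already shows $\mathcal{T}$ has out-degree at most $d$, and that the total number of partition elements ever created is at most $2|\mathcal{T}|$.

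The main step is the depth bound for $\mathcal{T}$. Here I would use that the linear program \eqref{Problem:Problem in lambda} has a fixed constraint matrix (only its right-hand side $-\bbc^T\bw$ depends on $\bw$), hence only finitely many bases; let $N$ be their number. I would then show that along any root-to-leaf path $1=p_0,p_1,\dots,p_T$ of $\mathcal{T}$, the optimal bases $I^{p_0},\dots,I^{p_{T-1}}$ used to refine the successive (refined) nodes on the path are pairwise distinct, forcing $T\le N$. For $a<b$: the portion $p_{a+1},\dots,p_b$ consists of secondary nodes, so the regions nest and $W^{p_b}\subseteq W^{p_{a+1}}=\tilde W^{j}$ for the index $j$ with which $p_a$ was split to produce $p_{a+1}$; by \eqref{eq:secondary_part}, on $W^{p_b}$ the $j$-th coordinate of the basic solution $-\tilde\bbd_{I^{p_a}}^{-1}\bbc^T\bw$ of $I^{p_a}$ is $\le 0$, and since the corresponding row of $\tilde\bbb$ is nonzero and $\inter(W^{p_b})\subseteq\inter(\tilde W^{j})\ne\emptyset$, it is strictly negative on $\inter(W^{p_b})$. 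Thus $I^{p_a}$ is primal-infeasible at the interior point $\bw^{p_b}$, whereas $I^{p_b}$ is, by construction, an optimal (hence primal-feasible) basis of \eqref{Problem:Problem in lambda} at $\bw=\bw^{p_b}$; so $I^{p_b}\ne I^{p_a}$. Combined with the out-degree bound this gives $|\mathcal{T}|\le 1+d+\dots+d^{N}<\infty$.

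Finally I would argue that any iteration returning neither at Step~3d nor at Step~3e(i) strictly increases the number of partition elements. Such an iteration has $\tau^*=\infty$; since it did not exit at 3e(i), every $p\in\mathcal{A}$ is secondary and is refined in 3e(ii), and $\mathcal{A}\ne\emptyset$ because $\tau^*=\max_{p\in\mathcal{L}}\tau^p$ is attained over the finite leaf set. Refining such a $p$ writes the full-dimensional set $W^p$ as a finite union $\bigcup_{j=0}^{k}\tilde W^{j}$ of closed sets; as a full-dimensional set is never a finite union of lower-dimensional ones, at least one $\tilde W^{j}$ is full-dimensional and is inserted as a new element. Since the number of partition elements can never exceed $2|\mathcal{T}|<\infty$, only finitely many iterations can fail to terminate, so Algorithm~\ref{alg:Dcuts} stops after finitely many iterations.

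The hard part is the depth bound: one must identify each node's linear decision rule with an optimal basis of \eqref{Problem:Problem in lambda} and check that the secondary cut \eqref{eq:secondary_part} spawning a child makes that basis primal-infeasible on the \emph{entire} descendant sub-partition, not merely at the splitting point. A related subtlety — needed so that a leaf cannot be selected and ``refined'' forever without the partition changing — is the fact that a full-dimensional region is not covered by finitely many lower-dimensional pieces, with interiors taken relative to $\operatorname{aff}(W)$ when $W$ is not full-dimensional; the remaining ingredients (that $\tilde W^{0},\dots,\tilde W^{k}$ cover $W^p$, feasibility of the LPs solved in Step~3b, and finiteness of the basis count) are routine.
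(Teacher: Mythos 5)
Your proof follows essentially the same route as the paper's: the algorithm's partition forms a tree of out-degree at most $d$, whose depth is bounded by the (finite) number of bases of \eqref{Problem:Problem in lambda} because the bases used along any root-to-leaf path must be pairwise distinct. The paper asserts that distinctness without argument, whereas you actually justify it (via primal-infeasibility of an ancestor's basis on the interior of its secondary descendants) and add the progress argument that each non-terminating iteration enlarges the partition; both are welcome elaborations of the same proof rather than a different approach.
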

\begin{proof}{Proof.}
The algorithm produces a tree where at each iteration one node of the tree is explored creating at most $d$ children. Thus, the maximal number of iterations $K$ is bounded by the number of possible tree nodes. Denoting $h$ as the maximal depth of the tree, the number of nodes is bounded by $\sum_{i=0}^{h} d^{i}=\frac{d^h-1}{d-1}$. 
Since the children of each node are derived from a basis, which can not be equal to any of the bases that created their ancestor nodes, it follows that the depth of the tree $h$ is bounded above by the number of feasible bases. The number of feasible bases is bounded above by ${{d+l}\choose{l}}\leq {2^{d+l}}$. Thus, we have that
$h\leq {2^{d+l}}$ and $K\leq \frac{d^{2^{d+l}}-1}{d-1}$ is finite. 
\end{proof}

\section{Numerical Results}\label{sec:NumericalResults}
In this section, we will examine the performance of DDBD (Algorithm~\ref{alg:DDBD}) for several examples. We compare our results to the AMIO partitioning scheme presented by \citet{BertsimasDunning2014}, as well as to the partition scheme based on the DBC (Algorithm~\ref{alg:Dcuts}) we suggested in the previous section combined with Remark~\ref{rem:Dcuts}(iii). 
Since all the algorithms suggest a lower bound (LB) and upper bound (UB), we looked at the UB to LB relative gap which is given by 
$$\text{UL gap}=\frac{UB-LB}{\max\{\min\{|LB|,|UB|\},1\}}$$

In the implementation of the AMIO methods we generated the added constraints for each part of the tree rather than generating it recursively at each node. We did so because, in this case, time constrictions proved to be more crucial than storage limitations. All methods were implemented in Julia v0.4.2 using Gurobi v6.5.1 solver. The algorithms were run on one core of a 32 processor AMD64 with 246.4G RAM.

We compare the methods' performance in three examples: the \emph{location-transportation problem}, with structure presented by \citet{AtamturkZhang2007}, in which the first-stage variable $\bx$ is partially binary; the \emph{capacitated network lot-sizing problem}, for which both structure and data is presented by \citet{Bertsimas2015} and all variables are continuous; and the \emph{unit commitment problem} presented by \citet{Bertsimas2013} with partially binary first-stage decision variables. 

\subsection{The Location-Transportation Problem}
Consider the problem in which $N$ possible facilities supply the demand of $L$ customers. For each facility $i$ there is fixed cost $a^{f}_i$ for opening the facility, a variable cost  $a^v_i$ for each unit produced in the facility, and a maximal production capacity $\sigma_i$. Furthermore, the transportation cost between facility $i$ and customer $j$ is given by $b_{ij}$.
Moreover, each customer $j$ has an unknown demand which can be represented as
$d_j=d^{min}_j+\delta_j u_j$ for some $0\leq u_j\leq 1$, which satisfy $\sum_j u_j \leq \gamma L$ where $\gamma\in (0,1)$, so the uncertainty set for the demand is given by
$$\mathcal{D}=\paren{\bd:\exists \bu\in\real^L_+,\bd=\bd^{min}+\bdelta \bu, \bu\leq \bone, \bone'\bu\leq \gamma L}.$$
The decision regarding which of the facilities to open and how much to produce in those facilities is made prior to the realization of the demand. Once the demand is realized the allocation of the units to the various customers is done so to minimize the transportation cost. The formulation of this problem is given as follows.
{\begin{align}\label{Prob:Location Transportation}
&\min_{\bx\in\real^N_+,\bz\in \integ^N,\by(\bd)}\quad \sum_{i=1}^{N} \left(a^{v}_ix_i+a^{f}_iz_i\right)+\left(\max_{\bd\in\mathcal{D}}\sum_{i=1}^{N}\sum_{j=1}^L b_{ij}y_{ij}(\bd)\right)
\end{align}
\begin{equation*}
\begin{aligned}
&\text{s.t.}& \sum_{i=1}^N y_{ij}(\bd)&\geq d_j, && j=1,\ldots,L,\;\forall \bd\in \mathcal{D},\\
&& x_i-\sum_{j=1}^L y_{ij}(\bd)&\geq 0, && i=1,\ldots,N,\;\forall \bd\in \mathcal{D},\\
&&\by(\bd)&\geq 0,&&\forall \bd\in \mathcal{D},\\
&& x_i&\leq \sigma_iz_i,&& i=1,\ldots,N.
\end{aligned}
\end{equation*}
}

We generated $100$ realization for this problem with $N=L=10$, where for each realization and each $i,j\in\{1,\ldots,10\}$, the problem's parameters are chosen uniformly at random from the following ranges $a^{f}_i\in[1,10]$,$a^v_i\in[0.1,1]$,
$b_{ij}\in[0,10]$, $\sigma_i\in [200, 700]$,
$d^{min}_j\in[10,500]$, $\delta_j\in[0.1,0.5]\cdot d^{min}_j$, and $\gamma=0.5$. To ensure feasibility, we chose the parameters so that the condition $\sum_i \sigma_i \geq \sum_j d_j$ is satisfied for any possible realization in the uncertainty set.

We found that, for this problem and under this choice of parameters (as well as many others), the affine decision rule is often optimal or close to optimal. This therefore raises the question of how fast does the algorithm detect that this is the case. All the algorithms were run until either reaching a gap of $10^{-3}$ or $1000$ seconds, the earlier of the two.

We first compared the performance of the CCG algorithm, which does not ensure feasibility, to the DDBD algorithm. The results are given in Table~\ref{tbl:CCG_Compare_LocationTransportation_N=L=10}. We see that for $31\%$ of the instances tested the CCG algorithm converged to an infeasible solution, while the DDBD always converged to a feasible solution without the need to utilize Step 4 of Algorithm~\ref{alg:DDBD}. Moreover, checking the DDBD solution is indeed feasible requires less than one second for all the instances. The average convergence time for the instances which converged to a feasible solution (equivalently optimal solution) was longer for the CCG than for the DDBD.
\begin{table}[!h]
\centering
\caption{Feasibility and runtime summary for the location-transportation problem with $N=L=10$.}\label{tbl:CCG_Compare_LocationTransportation_N=L=10}
{\renewcommand{\arraystretch}{0.7}
\setlength{\extrarowheight}{.1em}
\begin{tabular}{|c||c|c|}
\hline
Algorithm & \% Feasible & Mean (std) time (sec)\\
\hline
CCG & 69\% & 27.22 (15.78)\\
DDBD & 100\% & 22.59 (17.81)\\
\hline
\end{tabular}}
\end{table}

Next, we compare the performance of the DDBD to the AMIO and DBC algorithms. We also ran the AMIO algorithm on the dual problem, but this resulted in worse performance than the AMIO applied on the primal problem for all instances. Figure~\ref{Fig:TerminateStats__LocationTransportation_N=L=10} shows the number of instances not terminated (for any reason) by a specific time (left figure) and the termination reason statistics (right figure). We can see that the DDBD algorithm is the only one for which all instances reached optimality, followed by the DBC algorithm which  proved optimality for more than $80\%$ of the instances and was unable to continue partitioning in an additional $6\%$, while the AMIO had less than $50\%$ success, and all other instances reached the time limit with an average UL gap of $0.5\%$. Moreover, the DDBD algorithm terminated after at most $150$ seconds, while for the DBC and AMIO only $45\%$ and  $10\%$ of the instances successfully terminated by that time, respectively. 

\begin{figure}[ht]\centering
\caption{Termination statistics for the location-transportation problem with $N=L=10$.}
\label{Fig:TerminateStats__LocationTransportation_N=L=10}
{\begin{flushleft} \footnotesize Graph describing the number of instances not terminated at each time (left) and the cause of termination (right).\end{flushleft}}
\includegraphics[width=.75\textwidth]{./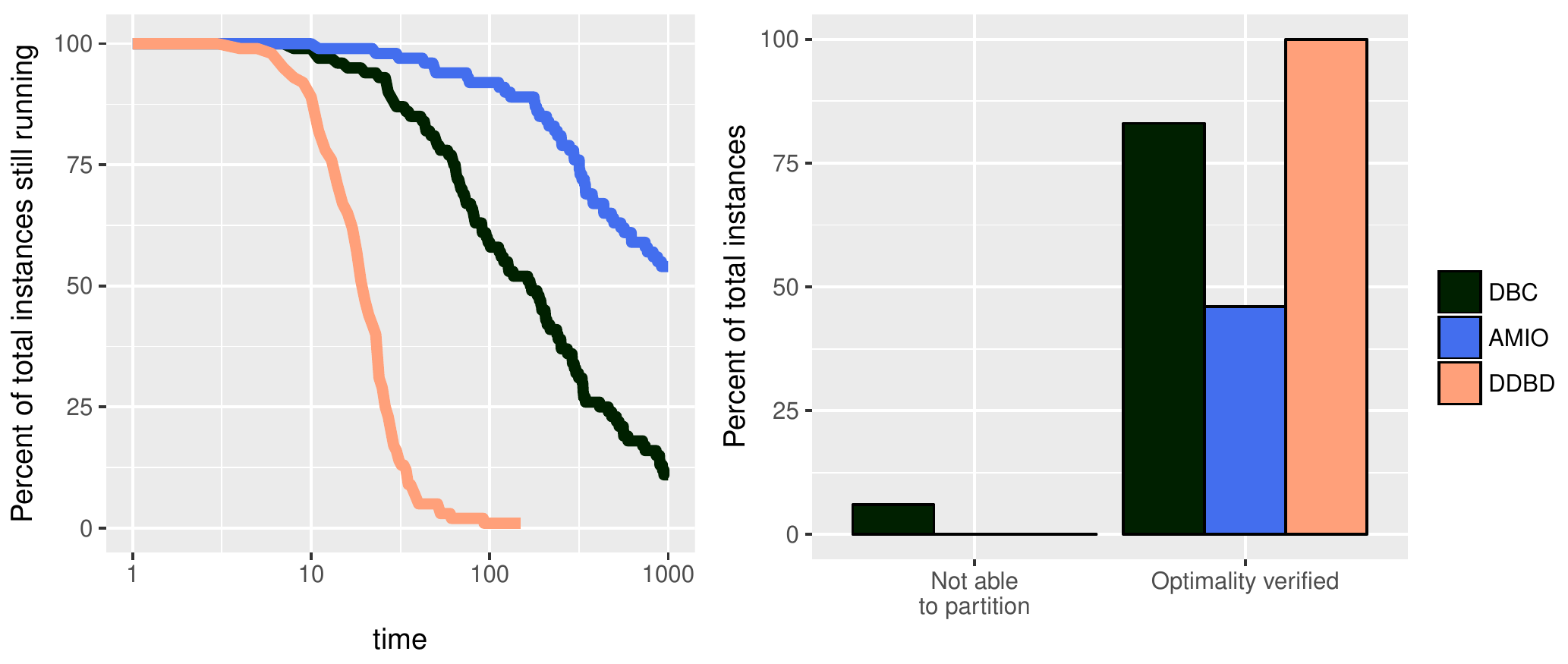}
\end{figure} 

Figure~\ref{Fig:UBLBGap_LocationTransportation_N=L=10} shows the average time to reach a certain UL gap (left) and the number of instances that reached that UL gap (right). The DDBD required $5$ seconds on average to obtain a finite UL gap, which indicates that it obtained a feasible solution. However, once the DDBD reached a finite gap it only took an average of 15 more seconds to obtain a gap of $10^{-3}$. In contrast, it took the DBC and AMIO $0.5-2$ seconds on average to obtain a feasible solution, however, they required at least an additional $100$ seconds on average to obtain a UL gap of $10^{-3}$, for the instances that reached that gap. This implies that the DDBD is much more scalable in terms of high accuracy solutions, but requires more time for low accuracy solutions.

\begin{figure}[t]
\centering
\caption{UL gap summary for the location-transportation problem with $N=L=10$.} 
\label{Fig:UBLBGap_LocationTransportation_N=L=10}
{\begin{flushleft} \footnotesize Graph describing the average time it took the instances to reach the UL gap (left) and the number of instances which reached the UL gap (right).\end{flushleft}}
\includegraphics[width=.75\textwidth]{./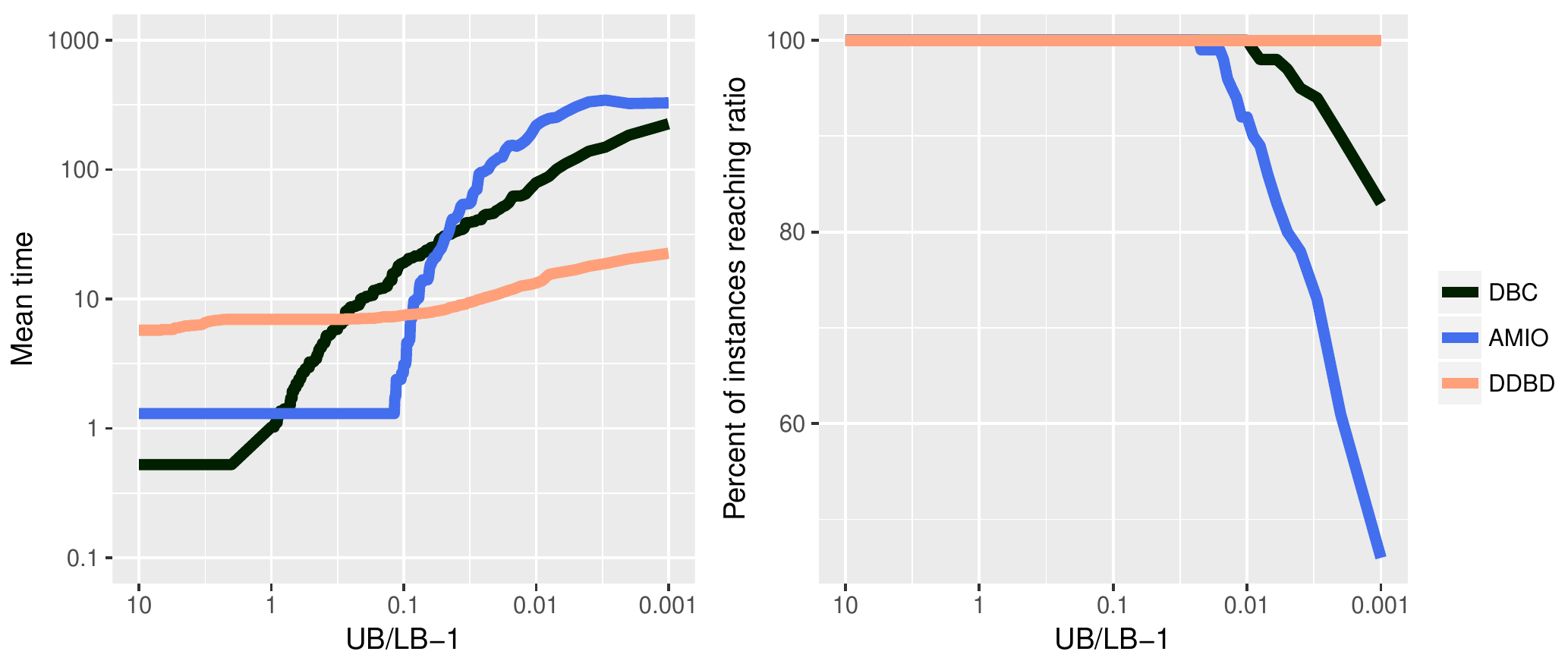}
\end{figure}

\subsection{The Capacitated Network Lot-Sizing Problem}
We consider the two-stage capacitated network lot-sizing problem. In this problem there are $N$ locations, and each location $i$ has an unknown demand $d_i$ which must be satisfied. The demand at location $i$ is satisfied by either buying stock $x_i$ in advance at cost $a_i$ per unit, or transporting amount $y_{ji}$ from location $j$, after the demand is realized, at cost $b_{ji}$ per unit. The transported amount from point $i$ to $j$ cannot exceed the capacity $c_{ij}$. We assume that the demand uncertainty set $\mathcal{D}$ is given by 
$$\mathcal{D}=\paren{\bd\in\real^N_+: d_i\leq  K,\quad\sum_i d_i\leq \sqrt{N}K}.$$
The full formulation of this two-stage problem is given in \eqref{Prob:LotSizing}.
\begin{equation}\label{Prob:LotSizing}
\begin{aligned}[t]
\min_{\bx\in\real^N_+,\by(\bd):\real^L\rightarrow\real^{N(N-1)}_+}&& \sum_{i=1}^{N} a_ix_i+\max_{\bd\in\mathcal{D}}\sum_{j\neq i} b_{ij}y_{ij}(\bd)&&\\
\qquad\text{s.t.}\qquad&& x_i-\sum_{j\neq i} y_{ij}(\bd)+\sum_{j\neq i} y_{ji}(\bd)&\geq d_i,\quad i=1,\ldots,N,\quad &\forall d\in \mathcal{D}\\
&&\by(\bd)&\leq \bc,\quad&\forall d\in \mathcal{D}\\
&&\bx&\leq K\bone.&\\
\end{aligned}
\end{equation}

We generated $100$ simulation of this problem for $N=5$, for each simulation the location of each $i$ is randomly chosen to be a point generated from a standard 2D Gaussian distribution. The cost $b_{ij}$ is set to be the Euclidean distance between point $i$ and point $j$, $a_i=1$ for all $i$, $c_{ij}=K/(N-1)u_{ij}$ where $u_{ij}$ are i.i.d. random variables generated from a standard uniform distribution, and demand is limited by $K=20$. We tested $100$ instances of the problem and set the time limit to $1000$ seconds and the required UL gap to $10^{-3}$.  

In Table~\ref{tbl:CCG_Compare_LotSizing_N=5}, we see that in $34\%$ of the instances the CCG algorithm terminated with an infeasible solution. Furthermore, for the instances in which the CCG reached optimality the DDBD took on average less time to reach optimality. However, for the DDBD algorithm, the time to verify the optimal solution after it is reached (stage (4) of Algorithm~\ref{alg:DDBD} which is not present in the CCG), was almost equal to the time it took to reach optimality. This additional runtime, which is still in the same order of magnitude as the original CCG runtime, can be viewed as the cost of ensuring feasibility.

\begin{table}[th]
\caption{Feasibility and runtime summary for the capacitated network lot-sizing problem with $N=5$.}\label{tbl:CCG_Compare_LotSizing_N=5}
\centering
{\renewcommand{\arraystretch}{0.7}
\setlength{\extrarowheight}{.1em}
\begin{tabular}{|c||c|c|c|}
\hline
Algorithm & \% Feasible & Time to reach optimality & Time to verify optimality\\
&& Mean (std) time (sec)& Mean (std) time (sec)\\
\hline
CCG & 66\% & 10.5 (2.96) & -\\
DDBD & 100\% & 7.1 (3.7) & 6.32 (12.17)\\
\hline
\end{tabular}}
\end{table}

Figure~\ref{Fig:TerminateStats_LotSizing_N=5} shows that in all the instances, the DDBD reached the optimal solution successfully after at most $100$ seconds, while the AMIO still had 75 instances running at that time, and the DBC terminated four instances without reaching optimality due to partitioning problems. The DDBC was the fastest algorithm, followed by the DBC. The AMIO had the slowest termination time, and three of the instances reached the time limit. The average UL gap for the instances that did not reach optimality was $0.66\%$ for the DBC, and $0.4\%$ for the AMIO.
\begin{figure}[h]
\caption{Termination statistics for the capacitated network lot-sizing problem with $N=5$.}
\label{Fig:TerminateStats_LotSizing_N=5}
{\begin{flushleft} \footnotesize Graph describing the number of instances not terminated at each time (left) and the cause of termination (right).\end{flushleft}}
\begin{center}\includegraphics[width=.75\textwidth]{./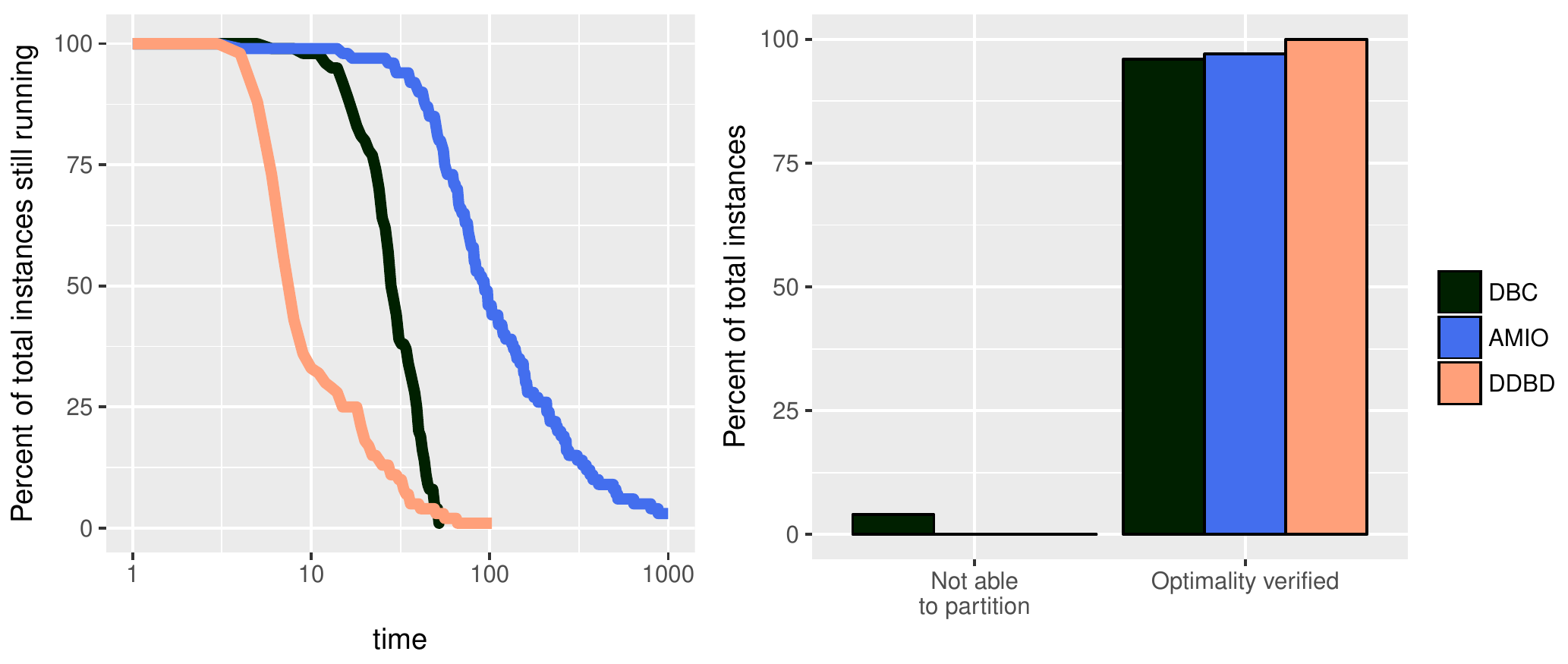}\end{center}
\end{figure}

In Figure~\ref{Fig:UBLBGapIt} we see very similar behavior to that of Figure~\ref{Fig:UBLBGap_LocationTransportation_N=L=10}. The DDBD took the longest (about 5 seconds) to initialize and obtain a feasible solution with finite UL gap, but only an additional 3 seconds more to reach the desired $10^{-3}$ UL gap. In contrast, both DBC and AMIO took less than a second to initialize but required $20$ and $100$ seconds more, respectively, to reach the desired UL gap. This fact reinforces our assertion that the DDBD is more scalable for higher accuracies.


\begin{figure}[!h]
\centering
\caption{UL gap summary for the capacitated network lot-sizing problem with $N=5$.}
\label{Fig:UBLBGapIt}
{\begin{flushleft} \footnotesize Graph describing the average time it took the instances to reach the UL gap (left) and the number of instances which reached the UL gap (right).\end{flushleft}}
\includegraphics[width=.75\textwidth]{./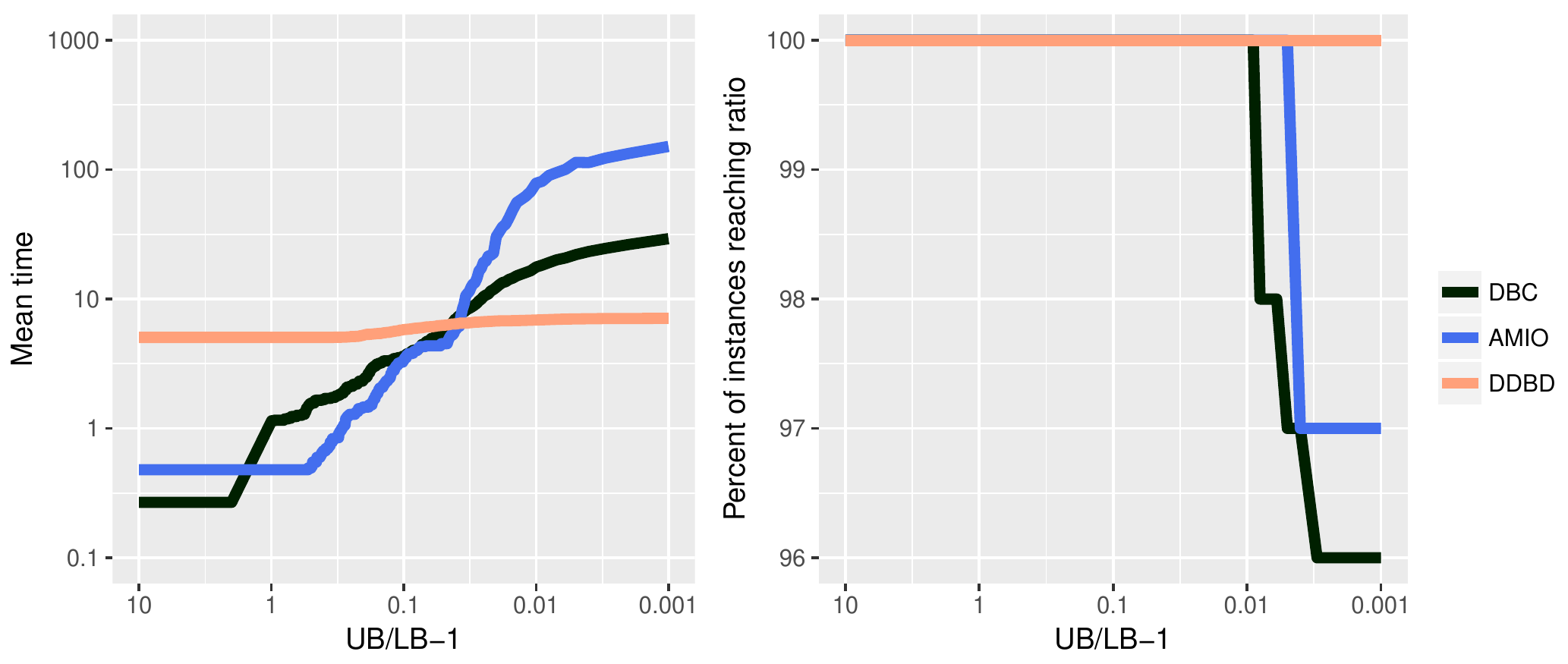}
\end{figure}

\subsection{The unit commitment problem}
The unit commitment problem is prevalent in the energy market community. In this problem, resources, such as generators, need to be assigned to either an on or an off state at various times. Usually there are constraints on the shifts between those states, as well as decisions regarding the capacity of these generators, which are all treated as design decisions (or first-stage decisions). After the demand is realized, the electricity production and its distribution among the consumers is decided upon (second-stage decisions). We used the model presented in \citep[Appendix]{Bertsimas2013}, without reserve constraints and with added variables
$z_i\in[0,1]$ such that the maximal production capacity of generator $i$ is set to $p^{\text{max}}_i=\overline{p}_i+\Delta{p}_i z_i$, and the cost of adding a unit to the maximal capacity is given by $A_i$. We used the IEEE 14-Bus test case with parameters that are given in Appendix~\ref{appx:unitcom_parameters}. 

We looked at scenarios with $T=6$, $T=12$ and $T=24$ time steps with a time limit of 1000 seconds. Both the DBC and the AMIO were not able to obtain any solution for the problem for any value of $T$.
The CCG algorithm and the DDBD algorithm both converged, however, in all cases the CCG converged to an infeasible solution (verified by the fact the the DDBD solution had higher lower bound). The results for the DDBD algorithm are given in Table~\ref{tbl:unit commitment}, where optimality time is the time it took to obtain the optimal solution, the number of inner iterations are the number of cuts added by $\mathcal{F}_1$, the verification time is the time it took to verify the solution is indeed feasible and hence optimal, and the number of outer iteration is the number of times $\mathcal{F}_2$ had to be applied. We can see that verifying feasibility of the optimal solution took much longer than finding it, due to the fact that the size of the problem becomes a major issue when solving \eqref{Prob:LinearDecision_Partition_Dual} in Step 3b of Algorithm~\ref{alg:Dcuts}. However, in all cases there was actually no need to run $\mathcal{F}_2$ to verify feasibility, since the optimal solution was indeed obtained, and no feasibility cuts were added.

The fact that the DDBD algorithm is the only one which was successfully run is an indicator of its superior scalability in the  problem size compared to the AMIO and DBC. Moreover, the ability of the $\mathcal{F}_1$ algorithm to efficiently discover the realizations which determine the optimal solution, contributed to its scalability, by making the feasibility verification, done by the more computationally expensive $\mathcal{F}_2$ algorithm, unnecessary. 
\begin{table}[h]
\centering
\caption{Unit commitment experiment results for the DDBD algorithm applied on the IEEE 14-Bus system.}
\label{tbl:unit commitment}
{\renewcommand{\arraystretch}{0.7}
\setlength{\extrarowheight}{.1em}
\begin{tabular}{|l|c|c|c|c|}
\hline
\multirow{2}{*}{$T$} & Optimality & \# Inner &Verification & \# Outer\\
& time (sec) &  Iterations & time (sec) & Iterations\\
\hline
\hline
6 & 19 & 4 &168 & 1\\
12 & 46 & 4 & 364 & 1\\
24 & 375 & 5& 12443& 1\\
\hline
\end{tabular}}
\end{table}
\section{Conclusions}
In this paper, we presented the DDBD algorithm which extends the CCG algorithm for adaptive two-stage optimization where only feasibility of the problem is assumed. We showed that by using two feasibility oracles: a fast inexact one and a slow accurate one, we can maintain the scalability of the CCG while ensuring feasibility of the solution. This scalability is manifested in the ability of the algorithm to find feasible high accuracy solutions faster then existing partitioning algorithms, and to solve large scale problems which the other methods could not practically solve. We empirically demonstrated that in some cases the fast inexact feasibility oracle, based on AM, may be enough to ensure feasibility of the solution without the need for the slower algorithm, shortening the algorithm's running time even further.

\begin{appendix}
\section{Proof of Proposition~\ref{prop:CCG_converge}}\label{appx:convergence proof}
\begin{proof}{Proof.}
First we will show that for any feasible $\bx\in X$ it must hold that $\overline{Z}(\bx)<\infty$. According to Assumption~\ref{ass:full recourse} and Proposition~\ref{prop:CCG_equiv} the equality  $\overline{Z}(\bx)=\tilde{Z}(\bx)$ holds true for any $\bx\in X$. Moreover, for every $\bu\in U$ it holds that $\underline{Z}(\bx,\{\bu\})<\infty$ and since $U$ is compact, then 
$$\overline{Z}(\bx)=\max_{\bu\in U} \underline{Z}(\bx,\{\bu\})=\underline{Z}(\bx,\{\overline{\bu}(\bx)\})<\infty.$$
Furthermore, since $V^{k-1}\subseteq V^k$, where the equality holds if and only if $\bu(\bx^k)\in V^{k-1}$.
If the algorithm stops after a finite number of steps $k$, it follows from the definition of the stopping criteria that
$$\underline{Z}(\bx^k,V^{k-1})=\underline{Z}(\bx^k,\{\overline{\bu}(\bx^k)\})= \overline{Z}(\bx^k),$$
which implies $\bx^k\in\argmin_{x\in X} \overline{Z}(\bx)$ is an optimal solution.

By Lemma~\ref{lemma:Maximizer_ExtremePoint} we have that $\overline{\bu}(
\bx)$ can always be chosen to be an extreme point of $U$, which in the case of a polytope means a vertex of $U$.
Thus, for each $\bx^k$ we add the vertex $\overline{\bu}(\bx^k)$ to $V^{k-1}$. Let $N$ be the number of vertices of the polyhedral set $U$, if the $N$th iteration is reached, then the set $V^N$ contains all vertices and specifically $\bu(\bx^{N+1})$, guaranteeing $\bx^{N+1}$ is an optimal solution, and the algorithm stops. 

If $U$ and $X$ are compact, and the algorithm does not terminate after a finite number of steps, then $V^{k-1}\subset V^k$ and  $\conv(V^{k-1})\subset \conv(V^k)\subset U$ since $\overline{\bu}(\bx^k)$ is an extreme point of the convex set $U$. Moreover, since $U$ is compact we have $\lim_{k\rightarrow\infty}\conv(V^k)=\overline{V}\subseteq U$. Following the compactness of both $X$ and $U$, there exist constants $L_1$ and $L_2$ such that for any feasible $\bx,\by\in X$ for any $V,W\subseteq U$ 
\begin{align}
|\underline{Z}(\bx,V)-\underline{Z}(\bx,W)|\leq L_1d_H(V,W),\; |\underline{Z}(\bx,V)-\underline{Z}(\by,V)|\leq L_2\norm{\bx-\by},\label{eq:Continuity}
\end{align}
where $d_H$ denotes the Hausdorff distance.
Furthermore, 
$\underline{Z}(\bx^k,V^{k-1})$ is non-decreasing, and since it is bounded from above it must converge to some value $Z$, and
$$Z=\lim_{k\rightarrow\infty}\underline{Z}(\bx^k,V^{k-1})=\lim_{k\rightarrow\infty}\min_{\bx\in X} \underline{Z}(\bx,V^{k-1})=\lim_{k\rightarrow\infty}\min_{\bx\in X} \underline{Z}(\bx,\conv(V^{k-1}))\leq \min_{x\in X} \overline{Z}(\bx),$$
where the second equality stems from the definition of $\bx^k$. Let $\bar{\bx}$ be a limit point of the sequence $\{\bx^k\}_{k\in\mathbb{N}}$ (existence is guaranteed by compactness of $X$),
then 
\begin{align*}
&\overline{Z}(\bar{\bx})-Z=\underline{Z}(\bar{\bx},U) -Z=\lim_{k\rightarrow\infty} \underline{Z}(\bx^{k},U) -  \lim_{k\rightarrow\infty}(\bx^k,\conv(V^{k-1}))=\\
&\lim_{k\rightarrow\infty} \underline{Z}(\bx^{k},\conv(V^{k}))-\underline{Z}(\bx^{k},\conv(V^{k-1}))\leq \lim_{k\rightarrow\infty} L_1 d_H(\conv(V^{k}),\conv(V^{k-1})) =0,
\end{align*}
where the third equality follows from the definition of $V^k$, the inequality follows from \eqref{eq:Continuity}, and the last equality follows from convergence of the set sequence $\{V^k\}_{k\in\mathbb{N}}$. 
Thus showing that $\bar{\bx}$ must be a minimizer of \eqref{eq:2stage_robust}.
\end{proof}

\section{Parameters for the unit commitment problem}\label{appx:unitcom_parameters}

The IEEE 14-bus example contains 14 buses ($\mathcal{N}_b=14$), 5 of which are also generators ($\mathcal{N}_g=5$), and 11 of which are loads ($\mathcal{N}_d=11$), with 20 transmission lines ($\mathcal{N}_l=20$). The demand fluctuation is set to be $5\%$ of the nominal demand. All reserve constraints are ignored. 

\begin{landscape}
\begin{table}[b]
\centering
\caption{Load demand nominal value.}
\label{tbl:Demand}
\resizebox{1\textwidth}{!}{
\begin{tabular}{|l|llllllllllllllllllllllll|}
\hline
 Loads/Time     & 1     & 2     & 3     & 4     & 5     & 6     & 7     & 8     & 9     & 10     & 11     & 12    & 13    & 14    & 15     & 16     & 17    & 18     & 19     & 20     & 21     & 22     & 23     & 24    \\
 \hline
Bus2  & 19.19 & 17.70 & 15.55 & 10.72 & 13.41 & 16.09 & 18.77 & 20.91 & 21.98 & 23.59  & 23.86  & 22.52 & 21.45 & 20.38 & 23.59  & 24.13  & 22.79 & 23.86  & 25.20  & 26.27  & 26.81  & 24.13  & 23.33  & 21.98 \\
Bus3  & 83.29 & 76.81 & 67.50 & 46.55 & 58.19 & 69.83 & 81.47 & 90.78 & 95.44 & 102.42 & 103.58 & 97.76 & 93.11 & 88.45 & 102.42 & 104.75 & 98.93 & 103.58 & 109.40 & 114.06 & 116.39 & 104.75 & 101.26 & 95.44 \\
Bus4  & 42.26 & 38.98 & 34.25 & 23.62 & 29.53 & 35.43 & 41.34 & 46.07 & 48.43 & 51.97  & 52.56  & 49.61 & 47.25 & 44.88 & 51.97  & 53.15  & 50.20 & 52.56  & 55.51  & 57.88  & 59.06  & 53.15  & 51.38  & 48.43 \\
Bus5  & 6.72  & 6.20  & 5.45  & 3.76  & 4.69  & 5.63  & 6.57  & 7.32  & 7.70  & 8.26   & 8.36   & 7.89  & 7.51  & 7.14  & 8.26   & 8.45   & 7.98  & 8.36   & 8.83   & 9.20   & 9.39   & 8.45   & 8.17   & 7.70  \\
Bus6  & 9.90  & 9.13  & 8.03  & 5.54  & 6.92  & 8.30  & 9.69  & 10.79 & 11.35 & 12.18  & 12.32  & 11.62 & 11.07 & 10.52 & 12.18  & 12.45  & 11.76 & 12.32  & 13.01  & 13.56  & 13.84  & 12.45  & 12.04  & 11.35 \\
Bus9  & 26.08 & 24.06 & 21.14 & 14.58 & 18.22 & 21.87 & 25.51 & 28.43 & 29.89 & 32.07  & 32.44  & 30.62 & 29.16 & 27.70 & 32.07  & 32.80  & 30.98 & 32.44  & 34.26  & 35.72  & 36.45  & 32.80  & 31.71  & 29.89 \\
Bus10 & 7.96  & 7.34  & 6.45  & 4.45  & 5.56  & 6.67  & 7.78  & 8.67  & 9.12  & 9.79   & 9.90   & 9.34  & 8.90  & 8.45  & 9.79   & 10.01  & 9.45  & 9.90   & 10.45  & 10.90  & 11.12  & 10.01  & 9.67   & 9.12  \\
Bus11 & 3.09  & 2.85  & 2.51  & 1.73  & 2.16  & 2.59  & 3.03  & 3.37  & 3.55  & 3.81   & 3.85   & 3.63  & 3.46  & 3.29  & 3.81   & 3.89   & 3.68  & 3.85   & 4.06   & 4.24   & 4.32   & 3.89   & 3.76   & 3.55  \\
Bus12 & 5.39  & 4.97  & 4.37  & 3.01  & 3.77  & 4.52  & 5.28  & 5.88  & 6.18  & 6.63   & 6.71   & 6.33  & 6.03  & 5.73  & 6.63   & 6.78   & 6.41  & 6.71   & 7.08   & 7.39   & 7.54   & 6.78   & 6.56   & 6.18  \\
Bus13 & 11.94 & 11.01 & 9.67  & 6.67  & 8.34  & 10.01 & 11.68 & 13.01 & 13.68 & 14.68  & 14.84  & 14.01 & 13.34 & 12.68 & 14.68  & 15.01  & 14.18 & 14.84  & 15.68  & 16.35  & 16.68  & 15.01  & 14.51  & 13.68 \\
Bus14 & 13.17 & 12.15 & 10.68 & 7.36  & 9.20  & 11.05 & 12.89 & 14.36 & 15.10 & 16.20  & 16.38  & 15.46 & 14.73 & 13.99 & 16.20  & 16.57  & 15.65 & 16.38  & 17.30  & 18.04  & 18.41  & 16.57  & 16.02  & 15.10\\
\hline
\end{tabular}}
\end{table}
\begin{table}[h]
\hfill
\begin{minipage}[t]{.25\linewidth}
\centering
\captionof{table}{Generator Data.}
\label{tbl:generator data}
\resizebox*{.8\textwidth}{!}
{
\begin{tabular}{|l|lllll|}
\hline
Generator                & G1    & G2   & G3   & G6   & G8   \\
\hline
Bus                      & Bus1  & Bus2 & Bus3 & Bus6 & Bus8 \\
\hline
$\overline{p}$ {[}MW{]}  & 166.2 & 70   & 50   & 50   & 50   \\
$\Delta{p}$ {[}MW{]}     & 332.4 & 140  & 100  & 100  & 100  \\
Pmin {[}MW{]}            & 0     & 0    & 0    & 0    & 0    \\
$RU$ {[}MW/h{]}          & 111   & 47   & 33   & 33   & 33   \\
$RD$ {[}MW{]}            & 111   & 47   & 33   & 33   & 33   \\
MinUP                    & 8     & 1    & 1    & 1    & 1    \\
MinDW                    & 8     & 1    & 1    & 1    & 1    \\
InitS                    & -8    & -1   & -1   & -1   & -1   \\
InitP                    & 0     & 0    & 0    & 0    & 0    \\
$S$                      & 8310  & 3500 & 2500 & 2500 & 2500 \\
$G$                      & 0     & 0    & 0    & 0    & 0    \\
$F$ {[}\${]}             & 1662  & 700  & 500  & 500  & 500  \\
$C$ {[}\$/MWh{]}         & 20    & 40   & 60   & 80   & 100  \\
$A$ {[}\$/MWh{]}         & 240   & 480  & 720  & 960  & 1200\\
\hline
\end{tabular}}
\end{minipage}
\hfill
\begin{minipage}[t]{.2\linewidth}
\centering
\captionof{table}{Line Data.}
\label{tbl:line data}
		\resizebox*{1\textwidth}{!}
		{
			\begin{tabular}{|l|lll|}
				\hline
				Branch Name & FromBus & ToBus & $f^{\text{max}}$ (MW) \\
				\hline
				Line1To2    & Bus1    & Bus2  & 135           \\
				Line1To5    & Bus1    & Bus5  & 135           \\
				Line2To3    & Bus2    & Bus3  & 135           \\
				Line2To4    & Bus2    & Bus4  & 135           \\
				Line2To5    & Bus2    & Bus5  & 135           \\
				Line3To4    & Bus3    & Bus4  & 135           \\
				Line4To5    & Bus4    & Bus5  & 135           \\
				Line4To7    & Bus4    & Bus7  & 135           \\
				Line4To9    & Bus4    & Bus9  & 135           \\
				Line5To6    & Bus5    & Bus6  & 135           \\
				Line6To11   & Bus6    & Bus11 & 135           \\
				Line6To12   & Bus6    & Bus12 & 135           \\
				Line6To13   & Bus6    & Bus13 & 135           \\
				Line7To8    & Bus7    & Bus8  & 135           \\
				Line7To9    & Bus7    & Bus9  & 135           \\
				Line9To10   & Bus9    & Bus10 & 135           \\
				Line9To14   & Bus9    & Bus14 & 135           \\
				Line10To11  & Bus10   & Bus11 & 135           \\
				Line12To13  & Bus12   & Bus13 & 135           \\
				Line13To14  & Bus13   & Bus14 & 135          \\
				\hline
		\end{tabular}}
\end{minipage}
\hfill
\begin{minipage}[t]{.525\linewidth}
\caption{Shift factors.}
\label{tbl:ShiftFactors}
\centering
\resizebox{1\textwidth}{!}{
\begin{tabular}{|l|lllllllllllll|}
     \hline
   Line        & Bus2         & Bus3         & Bus4         & Bus5         & Bus6         & Bus7         & Bus8         & Bus9         & Bus10        & Bus11        & Bus12        & Bus13        & Bus14        \\
           \hline
Line1To2   		&	-0.8380	&	-0.7465	&	-0.6675	&	-0.6106	&	-0.6291	&	-0.6573	&	-0.6573	&	-0.6518	&	-0.6477	&	-0.6386	&	-0.6309	&	-0.6323	&	-0.6433	\\
Line1To5   		&	-0.1620	&	-0.2535	&	-0.3325	&	-0.3894	&	-0.3709	&	-0.3427	&	-0.3427	&	-0.3482	&	-0.3523	&	-0.3614	&	-0.3691	&	-0.3677	&	-0.3567	\\
Line2To3   		&	0.0273	&	-0.5320	&	-0.1513	&	-0.1031	&	-0.1188	&	-0.1427	&	-0.1427	&	-0.1380	&	-0.1346	&	-0.1269	&	-0.1204	&	-0.1215	&	-0.1308	\\
Line2To4   		&	0.0572	&	-0.1434	&	-0.3167	&	-0.2158	&	-0.2487	&	-0.2986	&	-0.2986	&	-0.2888	&	-0.2817	&	-0.2655	&	-0.2519	&	-0.2543	&	-0.2738	\\
Line2To5   		&	0.0774	&	-0.0711	&	-0.1994	&	-0.2917	&	-0.2616	&	-0.2160	&	-0.2160	&	-0.2249	&	-0.2314	&	-0.2463	&	-0.2587	&	-0.2564	&	-0.2387	\\
Line3To4   		&	0.0273	&	0.4680	&	-0.1513	&	-0.1031	&	-0.1188	&	-0.1427	&	-0.1427	&	-0.1380	&	-0.1346	&	-0.1269	&	-0.1204	&	-0.1215	&	-0.1308	\\
Line4To5   		&	0.0799	&	0.3067	&	0.5026	&	-0.3012	&	-0.0389	&	0.3584	&	0.3584	&	0.2808	&	0.2240	&	0.0948	&	-0.0137	&	0.0061	&	0.1607	\\
Line4To7   		&	0.0030	&	0.0113	&	0.0186	&	-0.0111	&	-0.2075	&	-0.6338	&	-0.6338	&	-0.4469	&	-0.4043	&	-0.3076	&	-0.2264	&	-0.2412	&	-0.3569	\\
Line4To9   		&	0.0017	&	0.0066	&	0.0108	&	-0.0065	&	-0.1211	&	-0.1658	&	-0.1658	&	-0.2608	&	-0.2360	&	-0.1795	&	-0.1321	&	-0.1408	&	-0.2083	\\
Line5To6   		&	-0.0047	&	-0.0179	&	-0.0294	&	0.0176	&	-0.6714	&	-0.2004	&	-0.2004	&	-0.2924	&	-0.3597	&	-0.5128	&	-0.6415	&	-0.6181	&	-0.4348	\\
Line6To11  		&	-0.0028	&	-0.0108	&	-0.0177	&	0.0106	&	0.1979	&	-0.1207	&	-0.1207	&	-0.1760	&	-0.2873	&	-0.5402	&	0.1683	&	0.1452	&	-0.0356	\\
Line6To12  		&	-0.0004	&	-0.0016	&	-0.0026	&	0.0016	&	0.0291	&	-0.0177	&	-0.0177	&	-0.0259	&	-0.0161	&	0.0061	&	-0.5211	&	-0.1697	&	-0.0887	\\
Line6To13  		&	-0.0014	&	-0.0056	&	-0.0091	&	0.0055	&	0.1017	&	-0.0620	&	-0.0620	&	-0.0904	&	-0.0563	&	0.0213	&	-0.2886	&	-0.5936	&	-0.3104	\\
Line7To8   		&	0.0000	&	0.0000	&	0.0000	&	0.0000	&	0.0000	&	0.0000	&	-1.0000	&	0.0000	&	0.0000	&	0.0000	&	0.0000	&	0.0000	&	0.0000	\\
Line7To9   		&	0.0030	&	0.0113	&	0.0186	&	-0.0111	&	-0.2075	&	0.3662	&	0.3662	&	-0.4469	&	-0.4043	&	-0.3076	&	-0.2264	&	-0.2412	&	-0.3569	\\
Line9To10  		&	0.0028	&	0.0108	&	0.0177	&	-0.0106	&	-0.1979	&	0.1207	&	0.1207	&	0.1760	&	-0.7127	&	-0.4598	&	-0.1683	&	-0.1452	&	0.0356	\\
Line9To14  		&	0.0019	&	0.0071	&	0.0117	&	-0.0070	&	-0.1307	&	0.0797	&	0.0797	&	0.1163	&	0.0724	&	-0.0274	&	-0.1902	&	-0.2367	&	-0.6008	\\
Line10To11 		&	0.0028	&	0.0108	&	0.0177	&	-0.0106	&	-0.1979	&	0.1207	&	0.1207	&	0.1760	&	0.2873	&	-0.4598	&	-0.1683	&	-0.1452	&	0.0356	\\
Line12To13 		&	-0.0004	&	-0.0016	&	-0.0026	&	0.0016	&	0.0291	&	-0.0177	&	-0.0177	&	-0.0259	&	-0.0161	&	0.0061	&	0.4789	&	-0.1697	&	-0.0887	\\
Line13To14 		&	-0.0019	&	-0.0071	&	-0.0117	&	0.0070	&	0.1307	&	-0.0797	&	-0.0797	&	-0.1163	&	-0.0724	&	0.0274	&	0.1902	&	0.2367	&	-0.3992	\\
     \hline
\end{tabular}}
\hfill
\end{minipage}
\end{table}
\end{landscape}
\end{appendix}


\bibliographystyle{informs2014} 
\bibliography{CutsBib} 


\end{document}